\documentclass[12pt,reqno]{amsart}

\setlength{\textheight}{660pt} \setlength{\textwidth}{450pt}
\oddsidemargin -0mm \evensidemargin -0mm \topmargin -5pt
\baselineskip=0.20in

\setcounter{footnote}{1}


\def ~{\hspace{1mm}}

\newcommand{\Nbar}{\underline{N}}
\newcommand{\Vbar}{\underline{V}}
\newcommand{\Wbar}{\underline{W}}

\newcommand{\Tbar}{\underline{T}}
\newcommand{\deltabar}{\underline{\delta}}
\newcommand{\xunderbar}{\underline{x}}
\newcommand{\xoverbar}{{\overline{x}}}
\newcommand{\zoverbar}{{\overline{z}}}
\newcommand{\zbar}{{\overline{z}}}
\newcommand{\Dbar}{\overline{\mathbb D}}
\newcommand{\Ebar}{\overline{E}}
\newtheorem{thm}{Theorem}[section]
\newtheorem{cor}[thm]{Corollary}
\newtheorem{lem}[thm]{Lemma}

\newtheorem{defn}[thm]{Definition}
\newtheorem{rem}[thm]{Remark}

\numberwithin{equation}{section}

\def\textmatrix#1&#2\\#3&#4\\{\bigl({#1 \atop #3}\ {#2 \atop #4}\bigr)}
\def\dispmatrix#1&#2\\#3&#4\\{\left({#1 \atop #3}\ {#2 \atop #4}\right)}

\begin{document}
\title[The tetrablock as a spectral set]
{The tetrablock as a spectral set}

\author[Bhattacharyya]{Tirthankar Bhattacharyya}
\address[Bhattacharyya]{Department of Mathematics, Indian Institute of Science, Banaglore 560 012}
\email{tirtha@member.ams.org}

\thanks{MSC 2010: Primary: 47A20, 47A25. \\ The author's research is supported by Department of Science and
Technology, India through the project numbered SR/S4/MS:766/12 and
University Grants Commission, India via DSA-SAP}

\begin{abstract}

We study  a commuting triple of bounded
operators $(A, B, P)$ which has the tetrablock as a
spectral set.
\end{abstract}

\maketitle

\section{Introduction}

We follow Arveson's terminologies from \cite{sub2}.

A  compact subset $K$ of $\mathbb C^n$ is called a spectral set
for a commuting $n$-tuple of operators $\deltabar = (\delta_1,
\delta_2, \ldots ,\delta_n)$ acting on a Hilbert space $\mathcal
H$ if the Taylor joint spectrum of $\deltabar$ is contained in $K$
and
\begin{equation} \label{vN} \| r(\delta_1, \delta_2, \ldots , \delta_n) \| \le \| r \|_{\infty, K} =
\sup \{ | r(z_1, z_2, \ldots ,z_n) | : (z_1, z_2, \ldots ,z_n) \in
K \} \end{equation} for any rational function $r$ in rat$(K)$.

For an $m \times m$ matrix valued function $r$, we can make sense
of  $r(\delta_1, \delta_2, \ldots \delta_n)$. Any such $r$ is a
matrix of functions of the form
$$ r = [ r_{ij} ]_{i,j=1}^m$$
and hence
$$r(\delta_1, \delta_2, \ldots \delta_n) = [ r_{ij} (\delta_1, \delta_2, \ldots \delta_n)]_{i,j=1}^m.$$
This is then a block operator matrix acting as an operator on the
direct sum of $n$ copies of $\mathcal H$. The set $K$ is called a
complete spectral set if the von Neumann type inequality
(\ref{vN}) above holds for all rational functions taking values in
matrices of any order with the norm now being
$$ \| r \|_{\infty, K} =
\sup \{ \| r(z_1, z_2, \ldots ,z_n) \| : (z_1, z_2, \ldots ,z_n) \in K \}.$$

Say that $K$ has the property \textbf P if the following holds.
"If $K$ is a spectral set for a commuting tuple $\deltabar$, then
it is a complete spectral set for $\deltabar$." It has
historically intrigued operator theorists which subsets of the
complex plane or of higher dimensional Euclidean space have this
property.

\begin{enumerate}

\item In dimension one, the unit disk (von Neumann \cite{vN})
    has this property. Berger, Foias and Lebow extended that
    result to any simply connected domain. Whether a domain in
    the plane with a single hole enjoyed the property \textbf
    P was open for a long time till Agler showed in
    \cite{agler} that the annulus has this property. Dritschel
    and McCullough proved in \cite{DM} that a multiply
    connected domain in general does not. They use
    Gel'fand-Naimark-Segal construction combined with a
    cone-separation argument. Agler, Harland and Raphael in
    \cite{ahr} have shown that there exists a planar domain
    with two holes and an operator $\delta$ on a Hilbert space
    of dimension $4$ for which the property \textbf P fails.
    Thus, this branch of generalization from the disc to
    multiply connected domain is now complete.

    \item The other direction of generalization is to higher
        dimensions. In dimension two, the bidisc (Ando
        \cite{Ando-bidisc}) and the symmetrized bidisc have
        this property, see Agler and Young \cite{ay-jfa} -
        \cite{ay-jot}.

    \item No subset of $\mathbb C^3$ with property \textbf P
        is known.

    \item If $K$ is the closed unit ball of some norm on
        $\mathbb C^n$ where $n > 2$, then $K$ cannot have this
        property. See Paulsen \cite{Paulse-JFA} and Pisier
        \cite{Pisier-os}.

        \end{enumerate}

Theorem 1.2.2 and the remark following it in Arveson \cite{sub2}
gives a novel idea that has been taken up and pursued heavily by
operator theorists. A simultaneous dilation of $\deltabar$
consists of $n$ commuting bounded operators $\Delta_1, \Delta_2,
\ldots ,\Delta_n$ on a Hilbert space $\mathcal K$ containing
$\mathcal H$ in such a way that
\begin{equation} \label{dilation} P_{\mathcal H} \Delta_1^{k_1} \Delta_2^{k_2} \ldots \Delta_n^{k_n}
|_{\mathcal H} = \delta_1^{k_1} \delta_2^{k_2} \ldots
\delta_n^{k_n} \end{equation} for all non-negative integers $k_1,
k_2, \ldots ,k_n$. The dilation is called minimal if
$$ \mathcal K = \overline{\mbox{span}} \{\Delta_1^{k_1} \Delta_2^{k_2} \ldots \Delta_n^{k_n} h :
k_1, k_2, \ldots , k_n \mbox{ are non-negative integers and } h
\in \mathcal H \}.$$ Dilation was a game-changing new geometric
concept introduced by Sz.-Nagy and has made effective yet
unpredictable appearances in many places. For more details, see
the classic \cite{nazy}. Naturally, one wants the dilation
operators to be nicer than the given ones. The dilation
$(\Delta_1, \Delta_2, \ldots ,\Delta_n)$ is called a normal
dilation if its Taylor joint spectrum is contained in
the Shilov boundary of $K$ (relative to $A(K)$) and the $\Delta_i$
are normal operators. Arveson showed that $K$ is a complete
spectral set for $\deltabar$ if and only if $\deltabar$ has a
normal dilation.

Similarities and dissimilarities of the symmetrized bidisc and the
tetrablock have attracted recent attention from both complex
analysts and operator theorists. The geometry of them has been
studied by Edigarian, Kosinski and Zwonek in
 \cite{EZ}, \cite{EKZ}, \cite{Kos} and \cite{Zwo}.  Young's work on the symmetrized bidisc,
 the $\Gamma$-contractions and the tetrablock with his many co-authors
 (see \cite{awy}, \cite{awy-cor}, \cite{ay-jfa}, \cite{ay-ems}
 and \cite{ay-jot}) has been from opetator theoritic point of view.

The symmetrized bidisc
$$ G = \{ (z_1 + z_2 , z_1 z_2 ) : z_1, z_2 \in \mathbb D \} $$
and the tetrablock
$$ E = \{ (a_{11}, a_{22}, \det A ) : A = \dispmatrix a_{11} &
a_{12} \\ a_{21} & a_{22} \\ \mbox{ is in the classical Cartan domain of type II}\}$$ are both non-convex domains. The Lempert function and the Caratheodory distance coincide on them. But they cannot be exhausted by domains biholomorphic to convex ones.

Those are the similarities. The dissimilarities appear in operator
theory on these domains. Normal dilation was
constructed for a pair of commuting operators, having the
symmetrized bidisc as a spectral set, in \cite{bpsr}. Now we show
that given a triple of commuting operators, having the tetrablock
as a spectral set, its fundamental operators need to satisfy certain commutativity conditions so that there is a normal dilation.

\section{Preliminaries}

\begin{defn} \label{contraction}

Let $(A, B, P)$ be a triple of commmuting bounded
operators on a Hilbert space $\mathcal H$. We call it a tetrablock contraction
if $\Ebar$ is a spectral set for $(A, B, P)$.  \end{defn}

A tetrablock contraction consists of commuting contractions. This can be seen easily if we take the
polynomial $f_i(x_1, x_2, x_3) = x_i$ for any $i=1,2,3$. From the definition of $E$, it is evident that $\|
f_i \|_{\infty, \Ebar} \le 1$. Thus $\| A \| = \| f_1 ((A, B, P)) \| \le 1$. Similarly, $B$ and $P$ are contractions. Let $D_P = ( I - P^* P)^{1/2}$ be the defect operator.

 An added source of pleasure consists of numerous relations of the tetrablock to the
 symmetrized bidisk and of tetrablock contractions to $\Gamma$-contractions. The closed symmetrized bidisc will be  denoted by $\Gamma$. This is a polynomially convex subset of $\mathbb C^2$ with the distinguished boundary (the Shilov boundary relative to $A(\Gamma)$) being
$$ b\Gamma = \{ (z_1 + z_2 , z_1 z_2 ) : | z_1 | = | z_2 | = 1 \}.$$
A commuting pair of bounded operators $(S,P)$ is called

\begin{enumerate}

\item a $\Gamma$-contraction if the closed
symmetrized bidisc $\Gamma$ is a spectral set for $(S,P)$,

\item a $\Gamma$-unitary if $S$ and $P$ are normal operators
with the Taylor joint spectrum of the pair $(S,P)$ contained in $b\Gamma$,

\item a $\Gamma$-isometry if it is the restriction of a $\Gamma$-unitary to a joint invariant subspace.
\end{enumerate}

Agler and Young studied the $\Gamma$-contractions in a series of papers, see \cite{ay-jfa} - \cite{ay-jot}.
They proved that if $\Gamma$ is a spectral set for $(S,P)$, then it is a complete spectral set for $(S,P)$. Thus, by Arveson's theorem, a $\Gamma$-contraction has a normal dilation. In other words, a $\Gamma$ contraction has a $\Gamma$ unitary dilation. The change in point of view introduced in \cite{bpsr} explicitly constructed a $\Gamma$-isometric dilation of a $\Gamma$-contraction by finding solution of an operator
equation. We pick out the salient features from there as well as from earlier papers of Agler and Young (\cite{ay-jot} and \cite{ay-jfa}) which we summarize below. For a contraction $P$ and bounded operator $S$ commuting with $P$, define $ \rho( S,P) =
2(I-P^*P)-(S-S^*P)-(S^*-P^*S)$.

\begin{thm} \label{oldtheorem}
Let $(S,P)$ be a pair of commuting bounded operators on $\mathcal H$ with $P$ being a contraction.
Then the following are equivalent.

 \begin{enumerate}

 \item $\Gamma$ is a spectral set for the commuting pair
     $(S,P)$.

 \item $I - P^* P \ge {\mbox Re } \beta (S - S^*P)$ for all
     $\beta$ on the unit circle. In other words, $\rho (\beta
     S , \beta^2 P) \ge 0$ for all $\beta$ on the unit circle.
 \item The operator equation
$$ S - S^*P = (I - P^* P)^{1/2} \Phi (I - P^* P)^{1/2}$$
called its fundamental equation has a unique solution $\Phi \in
\mathcal B \; (\overline{\mbox{Ran}} (I - P^* P)^{1/2})$ with
numerical radius of $\Phi$ being not greater than one.
\item There is a pair of commuting normal operators $R$ and
    $U$ on a bigger Hilbert space $\mathcal K$ containing $\mathcal H$
    such that the Taylor joint spectrum $\sigma (R, U)$ is contained in the
    distinguished boundary of $\Gamma$ and
    $$ P_{\mathcal H} R^m U^n |_{\mathcal H} = S^m P^n$$
    for all non-negative integers $m$ and $n$.
\end{enumerate}
\end{thm}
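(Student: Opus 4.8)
The plan is to establish the cycle $(4)\Rightarrow(1)\Rightarrow(2)\Rightarrow(3)\Rightarrow(4)$, in which the first three links are soft and the last carries the analytic weight. The implication $(4)\Rightarrow(1)$ is immediate from Arveson's theorem recalled above: a commuting normal dilation $(R,U)$ with $\sigma(R,U)\subseteq b\Gamma\subseteq\Gamma$ and $P_{\mathcal H}R^mU^n|_{\mathcal H}=S^mP^n$ for all $m,n\ge 0$ makes $\Gamma$ a complete spectral set — in particular a spectral set — for $(S,P)$; concretely, the spectral theorem for commuting normals gives $\|r(R,U)\|=\sup_{\sigma(R,U)}|r|\le\|r\|_{\infty,\Gamma}$ for $r\in\mathrm{rat}(\Gamma)$, polynomial convexity of $\Gamma$ forces $\sigma(S,P)\subseteq\Gamma$, and compression to $\mathcal H$ does the rest.

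For $(1)\Rightarrow(2)$ I would feed the von Neumann inequality a one-parameter family of scalar rational functions. Testing $(1)$ on the coordinate function shows $\|S\|\le 2$. For $\omega$ in the open unit disc the function $\Phi_\omega(s,p)=(2\omega p-s)/(2-\omega s)$ is holomorphic on a neighbourhood of $\Gamma$ (its only pole $s=2/\omega$ has $|2/\omega|>2\ge|s|$ on $\Gamma$), and by the maximum principle the bound $\|\Phi_\omega\|_{\infty,\Gamma}\le 1$ reduces, at a point $(z_1+z_2,z_1z_2)$ of $b\Gamma$ with $|z_1|=|z_2|=1$, to the elementary identity
\[
|2-\omega(z_1+z_2)|^2-|2\omega z_1z_2-(z_1+z_2)|^2=(4-|z_1+z_2|^2)(1-|\omega|^2)\ge 0.
\]
Applying $(1)$ gives $\|(2\omega P-S)(2I-\omega S)^{-1}\|\le 1$ (the denominator is invertible since $|\omega|<1$ and $\|S\|\le 2$), and multiplying this norm inequality out yields $\mathrm{Re}\bigl(\omega(S-S^*P)\bigr)\le I-|\omega|^2P^*P-\frac{1-|\omega|^2}{4}S^*S$; letting $\omega\to\beta$ with $|\beta|=1$ produces $\mathrm{Re}\bigl(\beta(S-S^*P)\bigr)\le I-P^*P$, which is $(2)$.

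The step $(2)\Rightarrow(3)$ rests on a factorization lemma: if $D\ge 0$ and $\mathrm{Re}(\beta X)\le D^2$ for every $\beta\in\mathbb T$, then there is a unique $\Phi\in\mathcal B(\overline{\mathrm{Ran}}\,D)$ of numerical radius $w(\Phi)\le 1$ with $X=D\Phi D$. For existence, the choices $\beta=\pm 1,\pm i$ give $\pm\mathrm{Re}\,X\le D^2$ and $\pm\mathrm{Im}\,X\le D^2$; a Cauchy--Schwarz argument applied to the positive forms $D^2\pm\mathrm{Re}\,X$ and $D^2\pm\mathrm{Im}\,X$ shows that $X$ annihilates $\ker D$ and that $(Dx,Dy)\mapsto\langle Xx,y\rangle$ extends to a bounded sesquilinear form on $\overline{\mathrm{Ran}}\,D$, which is represented by the desired $\Phi$; the whole family of inequalities then translates into $\mathrm{Re}(\beta\Phi)\le I$ for all $\beta$, that is $w(\Phi)\le 1$. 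Uniqueness is immediate, since $D\Phi_1D=D\Phi_2D$ forces $\langle(\Phi_1-\Phi_2)Dx,Dy\rangle=0$ and $\{Dx\}$ is dense in $\overline{\mathrm{Ran}}\,D$. Taking $D=D_P$ and $X=S-S^*P$ gives $(3)$.

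Finally $(3)\Rightarrow(4)$, the substantive part. Starting from the solution $\Phi$ of the fundamental equation I would construct a $\Gamma$-isometric dilation by a Schäffer-type device: on $\mathcal K_0=\mathcal H\oplus\bigoplus_{n\ge 1}\mathcal D_P$ take the minimal isometric dilation $V_0$ of $P$, and define $R$ by the block operator matrix whose $(1,1)$-entry is $S$ and whose remaining entries, acting on the copies of $\mathcal D_P$, are assembled from $\Phi$ and $\Phi^*$ so as to keep $\mathcal H^\perp$ invariant for $R$. One then verifies that $V_0$ is an isometry, that $RV_0=V_0R$, and that $\rho(\beta R,\beta^2V_0)\ge 0$ for all $\beta\in\mathbb T$ — the last two assertions being exactly where the fundamental equation and the bound $w(\Phi)\le 1$ are consumed — so that $(R,V_0)$ is a $\Gamma$-contraction whose second component is an isometry, hence a $\Gamma$-isometry; co-invariance of $\mathcal H$ then yields $P_{\mathcal H}R^mV_0^n|_{\mathcal H}=S^mP^n$. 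Invoking the Agler--Young structure theory one extends the $\Gamma$-isometry $(R,V_0)$ to a $\Gamma$-unitary $(\widetilde R,\widetilde V)$ on a larger space $\mathcal K\supseteq\mathcal K_0$ with $\mathcal K_0$ invariant, $\widetilde R|_{\mathcal K_0}=R$ and $\widetilde V|_{\mathcal K_0}=V_0$, and this $(\widetilde R,\widetilde V)$ is the normal dilation demanded by $(4)$. The main obstacle is precisely this last implication: choosing the off-diagonal entries of $R$ correctly and checking both the commutation $RV_0=V_0R$ and the positivity $\rho(\beta R,\beta^2V_0)\ge 0$, and then carrying out the passage from a $\Gamma$-isometry to a $\Gamma$-unitary, which itself depends on the nontrivial classification of $\Gamma$-isometries.
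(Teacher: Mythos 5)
The paper does not prove this theorem: it is quoted as a summary of known results, with the work delegated to Agler--Young \cite{ay-jfa}, \cite{ay-jot} and to \cite{bpsr}. Your reconstruction follows exactly that route --- the test functions $(2\omega p-s)/(2-\omega s)$ and the identity $|2-\omega s|^2-|2\omega p-s|^2=(4-|s|^2)(1-|\omega|^2)$ on $b\Gamma$ for $(1)\Rightarrow(2)$, the factorization of $X$ with $\mathrm{Re}(\beta X)\le D^2$ as $D\Phi D$ with $w(\Phi)\le 1$ for $(2)\Rightarrow(3)$, and the Sch\"affer-type $\Gamma$-isometric dilation built from $\Phi$ followed by extension to a $\Gamma$-unitary for $(3)\Rightarrow(4)$ --- and the computations you do carry out (the boundary identity, the rearrangement of the norm inequality, uniqueness of $\Phi$) are correct; the steps you only sketch, namely the factorization lemma and the verification that the Sch\"affer construction commutes and is a $\Gamma$-isometry, are precisely the content of Theorems 4.3--4.4 of \cite{bpsr} and of the Agler--Young classification of $\Gamma$-isometries, so the outline is sound.
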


Explicit construction of the $\Gamma$ unitary $(R,U)$ involves the fundamental
operator $\Phi$ obtained above. Certain routine facts will be used without
further ado. Some of these are

\begin{enumerate}

\item if $(S,P)$ is a $\Gamma$-contraction and $P$ is a unitary,
then $(S,P)$ is a $\Gamma$-unitary,

\item  if $(S,P)$ is a $\Gamma$-contraction and $P$ is an isometry,
then $(S,P)$ is a $\Gamma$-isometry,

\end{enumerate}

\section{The fundamental equations for a tetrablock contraction}

The main content of this section is Theorem \ref{chain} which
solves the fundamental equations for a tetrablock contraction.
The tetrablock $E$ has several characterizations as
stated below from the papers \cite{awy} and \cite{awy-cor}. It follows from the
definition of $E$ that $ 1 - x_1 z \neq 0 \neq 1 - x_2 z$ for any
$\xunderbar \in E$ and any $z \in \Dbar$. Thus the holomorphic
functions
\begin{equation} \Psi (z , \xunderbar ) = \frac{x_3 z - x_1}{x_2 z - 1}
\mbox{ and } \Upsilon (z , \xunderbar ) = \frac{x_3 z - x_2}{x_1 z
- 1} \label{functions} \end{equation} can be defined on $\mathbb D
\times E$. Let $\mathbb M _2$ denote the algebra of all $2 \times
2$ complex matrices equipped with the operator norm $\| \cdot \|$.
Define $\pi : \mathbb M _2 \rightarrow \mathbb C ^3$ by

$$ \pi \left( \dispmatrix a_{11} & a_{12} \\ a_{21} & a_{22} \\ \right) = (a_{11}, a_{22}, \det A).$$
We quote from Abouhajar, White and Young a number of ways for
deciding membership of an $\xunderbar$ in $E$.

\begin{thm}[Abouhajar, White, Young] \label{AWY}

For $\xunderbar = (x_1, x_2, x_3)$ in $\mathbb C ^3$, the
following are equivalent. And they all guarantee membership in $E$
(respectively $\in \Ebar$).

\begin{enumerate}

\item $ 1 - x_1 z - x_2 w + x_3 zw \neq 0$ whenever $| z | <
    1$ and $| w | < 1$,

\item $\| \Psi ( \cdot , \xunderbar ) \|_{H^\infty} < 1$ (respectively $\leq 1$),

\hspace*{-0.5in} (2') $\| \Upsilon ( \cdot , \xunderbar ) \|_{H^\infty} < 1$ (respectively $\leq 1$),

\item $| x_1 - \xoverbar_2 x_3 | + |x_1 x_2 - x_3 | < 1 - |x_2|^2$ (respectively $\leq 1 - |x_2|^2$),

\hspace*{-0.5in} (3') $| x_2 - \xoverbar_1 x_3 | + |x_1 x_2 - x_3 | < 1 - |x_1|^2$ (respectively $\leq 1 - |x_1|^2$),

\item $|x_1|^2 - |x_2|^2 + |x_3|^2 + 2 |x_2 - \xoverbar_1 x_3
    | < 1$ (respectively $\leq 1$) and $|x_2| < 1$
    (respectively $\leq 1$),

\hspace*{-0.5in} (4') $ - |x_1|^2 + |x_2|^2 + |x_3|^2 + 2 |x_1
- \xoverbar_2 x_3 | < 1$ (respectively $\leq 1$) and $|x_1| <
1$ (respectively $\leq 1$),

\item $|x_1|^2 + |x_2|^2 - |x_3|^2 + 2 |x_1x_2 - x_3 | < 1$
    (respectively $\leq 1$) and $|x_3| < 1$ (respectively
    $\leq 1$),

\item $ | x_1 - \xoverbar_2 x_3 | + |x_2 - \xoverbar_1 x_3 | < 1 - |x_3|^2$ (respectively $\leq 1 - |x_3|^2$),

\item $\xunderbar = \pi(A)$ for an $A \in \mathbb M _2$ with $\| A \| < 1$ (respectively $\leq 1$),

\item $\xunderbar = \pi(A)$ for a symmetric $A \in \mathbb M _2$ with $\| A \| < 1$ (respectively $\leq 1$),

\item $| x_3 | < 1$ (respectively $\leq 1$) and there are complex numbers
$\beta_1$ and $\beta_2$ with $| \beta_1 | + |\beta_2 | < 1$ (respectively $\leq 1$) such that
    $$ x_1 = \beta_1 + \overline{\beta}_2 x_3 \mbox{ and }  x_2 = \beta_2 + \overline{\beta}_1 x_3.$$

\end{enumerate}

\end{thm}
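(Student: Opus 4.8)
The plan is to make condition~(7)---the defining condition $\xunderbar=\pi(A)$ with $\|A\|\le 1$ (resp.\ $<1$)---the hub of the argument, and to reduce every other item either to an elementary optimisation over $2\times2$ matrices or to an elementary estimate for a M\"obius function of one variable. First I would compute $\min\{\|A\|:\pi(A)=\xunderbar\}$. A matrix with prescribed diagonal $(x_1,x_2)$ and determinant $x_3$ has off-diagonal entries $t$ and $(x_1x_2-x_3)/t$ (the case $x_1x_2=x_3$ being treated separately), so $\|A\|^2$ is the larger root of $\lambda^2-(\operatorname{tr}A^*A)\lambda+|x_3|^2=0$, where $\operatorname{tr}A^*A=|x_1|^2+|x_2|^2+|t|^2+|x_1x_2-x_3|^2/|t|^2$. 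The arithmetic--geometric mean inequality gives $\min_t\operatorname{tr}A^*A=|x_1|^2+|x_2|^2+2|x_1x_2-x_3|$, attained at $|t|^2=|x_1x_2-x_3|$, i.e.\ at $t=(x_1x_2-x_3)^{1/2}$, so the minimising $A$ may be taken symmetric; feeding this into the quadratic formula and simplifying produces exactly the inequality of~(5). This gives $(7)\Leftrightarrow(8)\Leftrightarrow(5)$. The easy half of the bridge to~(1) is then immediate from $\det\bigl(I-\operatorname{diag}(z,w)A\bigr)=1-x_1z-x_2w+x_3zw$ together with $\|\operatorname{diag}(z,w)A\|<1$ for $(z,w)\in\mathbb{D}^2$, so $(7)\Rightarrow(1)$.

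Next I would dispose of the one-variable conditions. Reading $1-x_1z-x_2w+x_3zw$ as an affine function of $w$ for fixed $z\in\mathbb{D}$ and locating its zero, (1) says precisely that this zero never lies in $\mathbb{D}$, which rearranges to $|\Upsilon(z,\xunderbar)|\le 1$ for all $z\in\mathbb{D}$; interchanging the roles of $z$ and $w$ gives the $\Psi$ version, while setting $w=0$ (resp.\ $z=0$) in~(1) forces $|x_1|\le 1$ (resp.\ $|x_2|\le 1$), so that $\Psi$ and $\Upsilon$ really are $H^\infty$ functions. Hence $(1)\Leftrightarrow(2)\Leftrightarrow(2')$. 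For $(2)\Leftrightarrow(4)$, when $|x_2|<1$ the function $\Psi(\cdot,\xunderbar)$ has no pole on $\Dbar$, so $\|\Psi\|_{H^\infty}=\max_{|z|=1}|\Psi(z,\xunderbar)|$; squaring the inequality $|x_3z-x_1|\le|x_2z-1|$ on $|z|=1$, expanding, and minimising the resulting real-affine function of $z$ over the circle produces the cross term $2|x_2-\xoverbar_1x_3|$ and exactly the inequality of~(4); $(2')\Leftrightarrow(4')$ is the mirror image with $x_1$ and $x_2$ exchanged. The remaining purely algebraic items (3), (3'), (4), (4'), (5), (6) are mutually convertible by means of the identities
\[
|x_1-\xoverbar_2x_3|^2-|x_2-\xoverbar_1x_3|^2=(|x_1|^2-|x_2|^2)(1-|x_3|^2),
\]
\[
|x_1-\xoverbar_2x_3|^2-|x_1x_2-x_3|^2=(|x_1|^2-|x_3|^2)(1-|x_2|^2),
\]
and the mirror image of the second under $x_1\leftrightarrow x_2$, which let one trade the three cross terms $|x_1-\xoverbar_2x_3|$, $|x_2-\xoverbar_1x_3|$, $|x_1x_2-x_3|$ against one another; in particular the inequalities of~(4) and~(4') add up to that of~(6). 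Combined with the auxiliary bounds $|x_1|\le1$, $|x_2|\le1$, $|x_3|\le1$ that these conditions entail, this closes the loop and supplies the reverse bridge $(1)\Rightarrow(2),(2')\Rightarrow(4),(4')\Rightarrow(5)\Rightarrow(7)$. Finally, for~(9), when $|x_3|<1$ I would solve the conjugate-linear system $x_1=\beta_1+\overline{\beta_2}x_3$, $x_2=\beta_2+\overline{\beta_1}x_3$ explicitly, getting $\beta_1=(x_1-\xoverbar_2x_3)/(1-|x_3|^2)$ and $\beta_2=(x_2-\xoverbar_1x_3)/(1-|x_3|^2)$, so that $|\beta_1|+|\beta_2|\le 1$ is exactly the inequality of~(6).

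Once the equivalences are in hand, the assertion that the conditions ``guarantee membership in $E$ (resp.\ $\Ebar$)'' is automatic, since~(7) (equivalently~(8)) with $\|A\|<1$ (resp.\ $\le 1$) is, by the definition of $E$, the statement $\xunderbar\in E$ (resp.\ $\xunderbar\in\Ebar$). I expect the main obstacle to be not any single step---each implication above is a short calculation---but the bookkeeping at the boundary of the domain, together with the consistent tracking of strict versus non-strict inequalities so that the ``$E$'' and ``$\Ebar$'' statements both come out right. The delicate strata are precisely: $|x_1|=1$ or $|x_2|=1$, where $\Psi$ or $\Upsilon$ acquires a removable or genuine singularity on $\partial\mathbb{D}$ and a separate argument is needed (typically: the condition then pins $\xunderbar$ to the distinguished boundary of $E$, where all quantities are explicit); $x_1x_2=x_3$, where the matrix parametrisation degenerates; and $|x_3|=1$, where the parametrisation behind~(9) breaks down. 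The other point worth care is to lay the implications out as a spanning tree that never invokes the genuinely non-elementary direction $(1)\Rightarrow(7)$ directly, routing instead through $(1)\Rightarrow(2)\text{ and }(2')\Rightarrow(4)\text{ and }(4')\Rightarrow(5)\Rightarrow(7)$; this is what keeps the whole proof elementary.
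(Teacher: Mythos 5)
This theorem is not proved in the paper at all: it is quoted verbatim from Abouhajar, White and Young (\cite{awy}, \cite{awy-cor}), so there is no in-paper argument to compare yours against. Judged on its own, your outline is essentially a correct reconstruction of the strategy in the cited source. The hub computation is right: for $\pi(A)=\xunderbar$ the off-diagonal entries have product $x_1x_2-x_3$, the singular values are the roots of $\lambda^2-\operatorname{tr}(A^*A)\lambda+|x_3|^2$, AM--GM gives $\min\operatorname{tr}(A^*A)=|x_1|^2+|x_2|^2+2|x_1x_2-x_3|$ with a symmetric minimiser, and ``larger root $\le 1$'' unpacks to exactly the inequality of (5) together with $|x_3|\le 1$; the determinant identity $\det(I-\operatorname{diag}(z,w)A)=1-x_1z-x_2w+x_3zw$ correctly yields $(7)\Rightarrow(1)$, the reduction of (1) to the one-variable bounds on $\Psi$ and $\Upsilon$ is correct, the passage from $\|\Psi\|_{H^\infty}\le 1$ to (4) by maximising a real-affine function of $z$ on the circle is correct, and both displayed algebraic identities check out (as does the observation that (4) plus (4$'$) sum to (6), and that the solution of the conjugate-linear system in (9) gives $\beta_1=(x_1-\xoverbar_2x_3)/(1-|x_3|^2)$, $\beta_2=(x_2-\xoverbar_1x_3)/(1-|x_3|^2)$, turning $|\beta_1|+|\beta_2|\le 1$ into (6)). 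Two caveats. First, several links in your cycle --- notably that (6) alone recovers (4) and (4$'$), and the full mutual convertibility of (3), (3$'$), (4), (4$'$), (5), (6) --- are asserted via the identities rather than carried out; they do work (e.g.\ writing $a=|x_1-\xoverbar_2x_3|$, $b=|x_2-\xoverbar_1x_3|$, one has $(|x_1|^2-|x_2|^2)(1-|x_3|^2)=(a-b)(a+b)$ and $a+b\le 1-|x_3|^2$ gives $|x_1|^2-|x_2|^2+2b\le a+b$), but each requires the sign discussion you only allude to. Second, the boundary strata you flag ($|x_1|=1$, $|x_2|=1$, $|x_3|=1$, $x_1x_2=x_3$) and the strict-versus-nonstrict bookkeeping are genuinely where the work lies --- note in particular that condition (1) as quoted here carries no ``respectively'' clause and, as you derive it, is equivalent only to the nonstrict bound $\|\Upsilon\|_{H^\infty}\le 1$; in the original source the open and closed cases are stated as separate theorems with slightly different versions of (1). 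So your proposal is a sound skeleton of the standard proof, not a complete one, and in the context of this paper the honest answer is that the result is imported, not proved.
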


The theorem above will be crucial for the purpose of this note.
And we have a neat new criterion for membership of an $(x_1, x_2, x_3)$ in $E$.

\begin{lem}
\label{neat}
The triple $(x_1, x_2, x_3)$ is in $E$ if
and only if the pair $(x_1 + z x_2 , zx_3)$ is in $G$
 for every $z$ on the unit circle.

\end{lem}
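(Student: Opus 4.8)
The plan is to convert each of the two memberships into a concrete inequality between moduli and then match them by taking a supremum over the unit circle. On the tetrablock side I would invoke condition~(6) of Theorem~\ref{AWY}: the triple $(x_1,x_2,x_3)$ lies in $E$ if and only if $|x_1-\xoverbar_2x_3|+|x_2-\xoverbar_1x_3|<1-|x_3|^2$. On the symmetrized-bidisc side I would use the standard description that $(s,p)\in G$ if and only if $|p|<1$ and $|s-\bar s p|<1-|p|^2$; this is the strict (open) form of the scalar case of Theorem~\ref{oldtheorem}(2), where for a one-dimensional Hilbert space the condition $I-P^*P\ge \mathrm{Re}\,\beta(S-S^*P)$ for all $\beta$ on the circle becomes $1-|p|^2\ge\mathrm{Re}\,\bigl(\beta(s-\bar s p)\bigr)$ for all such $\beta$, i.e. $1-|p|^2\ge|s-\bar s p|$ (the description of the open set $G$ is also recorded in \cite{ay-jfa}).

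With these two facts in hand the argument is short. Fixing $z$ on the unit circle and writing $s=x_1+zx_2$, $p=zx_3$, one has $|p|=|x_3|$ and the routine identity $s-\bar s p=(x_1-\xoverbar_2x_3)+z(x_2-\xoverbar_1x_3)$ (using $\bar z z=1$). Hence $(x_1+zx_2,zx_3)\in G$ precisely when $|x_3|<1$ and $\bigl|(x_1-\xoverbar_2x_3)+z(x_2-\xoverbar_1x_3)\bigr|<1-|x_3|^2$. Now I would let $z$ vary over the circle: $|x_3|$ does not depend on $z$, and $\sup_{|z|=1}|a+zb|=|a|+|b|$ with the supremum attained, so $(x_1+zx_2,zx_3)$ lies in $G$ for \emph{every} $z$ of modulus one exactly when $|x_1-\xoverbar_2x_3|+|x_2-\xoverbar_1x_3|<1-|x_3|^2$ (the auxiliary requirement $|x_3|<1$ being then automatic, since the left-hand side is nonnegative). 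By condition~(6) of Theorem~\ref{AWY} this is precisely $(x_1,x_2,x_3)\in E$, which proves the lemma.

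There is no real obstacle here beyond locating the two matching characterizations; the one point requiring care is the strict/non-strict bookkeeping, since Theorem~\ref{oldtheorem} and the scalar computation naturally describe the \emph{closed} symmetrized bidisc $\Gamma$, whereas the lemma concerns the open sets $E$ and $G$, so one must either appeal to the known description of the open set $G$ (equivalently, to the Schur--Cohn criterion for $z^2-sz+p$ to have both roots in $\mathbb D$) or argue that $G$ is the interior of $\Gamma$. I would also note that the same statement can be reached from condition~(1) of Theorem~\ref{AWY} via the factorization $1-(x_1+zx_2)\beta+zx_3\beta^2=1-x_1\beta-x_2(z\beta)+x_3\beta(z\beta)$, but that route forces one to compare the ``diagonal'' locus $\{(\beta,z\beta):|z|=1,\,|\beta|\le 1\}$ with the full bidisc, which is exactly the subtlety that the modulus description of $G$ sidesteps.
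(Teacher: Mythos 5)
Your argument is correct, and its core is the same as the paper's: the identity $s_z-\overline{s_z}p_z=(x_1-\overline{x}_2x_3)+z(x_2-\overline{x}_1x_3)$ together with the modulus description of $G$ quoted in the paper as (\ref{sp}) and condition (6) of Theorem \ref{AWY}. (Your form of the criterion for $G$, namely $|p|<1$ and $|s-\overline{s}p|<1-|p|^2$, is equivalent to (\ref{sp}): writing $s=Re^{i\theta}$ one gets $|s-\overline{s}p|\ge R(1-|p|)$, so the strict inequality already forces $|s|<2$, and likewise $|p|<1$; so nothing is lost relative to the paper's version, which instead carries the condition $|s|<2$ and verifies it in the forward direction via condition (9).) Where you genuinely diverge is the converse: the paper re-derives condition (9) by exhibiting the function $\beta(z)=\beta_1+z\beta_2$ and checking $|\beta_1|+|\beta_2|<1$ by choosing unimodular $z_1,z_2$ attaining the supremum, whereas you never touch condition (9) at all and instead collapse both implications into a single chain of equivalences through $\sup_{|z|=1}|a+zb|=|a|+|b|$ and condition (6). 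Your route is shorter and more symmetric, at the cost of leaning slightly harder on the exact form of the scalar criterion for the open set $G$; the paper's route is longer but incidentally exhibits the $\beta_1,\beta_2$ of condition (9), which is the data reused elsewhere in the tetrablock theory. Your cautionary remark about the strict versus non-strict bookkeeping is well placed, and the justification via the Schur--Cohn criterion (rather than a literal scalar reading of Theorem \ref{oldtheorem}, which describes the closed set $\Gamma$) is the right one.
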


\begin{proof} Theorem 1.1 in \cite{ay-jot} gives several characterizations of when a pair $(s,p)$ will be in
$G$. Of those characterizations, the one most suitable for our
present purpose is that

\begin{equation} \label{sp} | s - \overline{s} p | < 1 - |p|^2 \mbox{ and } |s| < 2. \end{equation}
Let $s_z = x_1 + z x_2$ and $p_z = z x_3$. Let $(x_1, x_2, x_3)$
be in $E$. By part (9) of Theorem \ref{AWY}, we have $| s_z | <
2$. Now
$$ | s_z - \overline{s_z} p_z | = | x_1 + zx_2 - (\overline{x_1} +
\overline{z} \overline{x_2}) z x_3 | = | x_1  -  \overline{x_2}
x_3 + z(x_2 - \overline{x_1} x_3) | \le | x_1  -
\overline{x_2} x_3 | + | x_2 - \overline{x_1} x_3 |.$$
Now appeal to part (6) of Theorem \ref{AWY} above to complete the
proof that $(s_z , p_z)$ is in $G$.

Conversely, let $(s_z , p_z)$ be in $G$ (respectively $\Gamma$)
for all $z$ on the circle. By the characterization (\ref{sp}), we
have a function $\beta$ on the circle which satisfies
$$ s_z - \overline{s_z} p_z = \beta(z) (1 - | p |^2) \mbox{ and } | \beta (z) | < 1.$$
Now
$$ \beta(z) = \frac{ s_z - \overline{s_z} p_z}{1 - | p |^2}
= \frac{ x_1 + z x_2 - \overline{x_1 + z x_2} z x_3}{1 - |x_3|^2}
= \frac{(x_1 - \overline{x_2}x_3) + z (x_2 \overline{x_1} x_3)}{1 - |x_3|^2}.$$
Since this holds for all $z$ on the circle, we have $\beta(z) = \beta_1 + z\beta_2$, where
$$ \beta_1 = \frac{x_1 - \overline{x_2}x_3}{1 - |x_3|^2} \mbox{ and } \beta_2 = \frac{x_2 - \overline{x_1}x_3}{1 - |x_3|^2}.$$
Clearly,
$$ x_1 = \beta_1 + \overline{\beta}_2 x_3 \mbox{ and }  x_2 = \beta_2 + \overline{\beta}_1 x_3 .$$
Moreover, for any $z_1$ and $z_2$ on the unit circle, we have
$$ | z_1 \beta_1 + z_2 \beta_2 | = |z_1| | \beta_1 + \frac{z_2}{z_1} \beta_2 | = |\beta(\frac{z_2}{z_1} )| < 1 .$$
Now if we choose $z_1$ and $z_2$ so that $ z_1 \beta_1 + z_2
\beta_2 = |\beta_1| + |\beta_2|$ and apply part (9) of Theorem
\ref{AWY}, that finishes the proof.
\end{proof}
The first thing to observe about tetrablock contractions is that the defining criterion can be greatly simplified.

\begin{lem} \label{simpler}

A commuting triple of bounded operators $(A, B, P)$ is a tetrablock contraction if and only if \begin{equation} \label{pT} \| f (A, B, P) \| \le \| f \|_{\infty, \Ebar} = \sup \{ | f(x_1,x_2,x_3) | : (x_1,x_2,x_3) \in \Ebar \}\end{equation}
for any holomorphic polynomial $f$ in three variables.
\end{lem}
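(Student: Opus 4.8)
The plan is to exploit two classical facts about the tetrablock: that $\Ebar$ is polynomially convex (a fact established by Abouhajar, White and Young) and the consequent Oka--Weil approximation theorem. The ``only if'' direction is immediate, since every holomorphic polynomial belongs to $\mathrm{rat}(\Ebar)$, so (\ref{vN}) specializes to (\ref{pT}). For the ``if'' direction I would assume (\ref{pT}) and verify the two clauses in the definition of a spectral set: that the Taylor joint spectrum $\sigma_T(A,B,P)$ is contained in $\Ebar$, and that the von Neumann inequality holds for every rational function with no poles on $\Ebar$.

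First I would establish the spectral inclusion. The Taylor joint spectrum $\sigma_T := \sigma_T(A,B,P)$ is a non-empty compact subset of $\mathbb C^3$. Given $\lambda \notin \Ebar$, polynomial convexity of $\Ebar$ produces a polynomial $p$ with $|p(\lambda)| > \|p\|_{\infty,\Ebar}$; after rescaling, $\|p\|_{\infty,\Ebar} < 1 < |p(\lambda)|$. Then (\ref{pT}) gives $\|p(A,B,P)\| < 1$, so $p(A,B,P)$ has spectral radius less than one, whereas the spectral mapping theorem for the Taylor joint spectrum would force $p(\lambda) \in \sigma(p(A,B,P))$ if $\lambda$ lay in $\sigma_T$ --- a contradiction. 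Hence $\sigma_T \subseteq \Ebar$, which in particular makes the Taylor functional calculus applicable to every $r \in \mathrm{rat}(\Ebar)$, as such an $r$ is then holomorphic on a neighbourhood of $\sigma_T$.

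Next, for a fixed $r \in \mathrm{rat}(\Ebar)$, holomorphic on an open neighbourhood $\Omega$ of $\Ebar$, I would pick a bounded polynomial polyhedron $L$ with $\Ebar \subseteq \mathrm{int}\,L \subseteq L \subseteq \Omega$ (possible precisely because $\Ebar$ is polynomially convex), and invoke Oka--Weil to obtain holomorphic polynomials $p_n \to r$ uniformly on $L$. Since $L$ is a neighbourhood of $\sigma_T$, continuity of the Taylor functional calculus gives $p_n(A,B,P) \to r(A,B,P)$ in operator norm; combining this with (\ref{pT}) and with $\|p_n\|_{\infty,\Ebar} \to \|r\|_{\infty,\Ebar}$ (uniform convergence on $\Ebar \subseteq L$) yields $\|r(A,B,P)\| \le \|r\|_{\infty,\Ebar}$. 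This is exactly (\ref{vN}) for $\Ebar$ and $r$, so $\Ebar$ is a spectral set for $(A,B,P)$.

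I expect the one delicate point to be the transition from uniform approximation on $\Ebar$ to genuine norm-convergence of $p_n(A,B,P)$: the Taylor functional calculus is continuous only with respect to uniform convergence on neighbourhoods of $\sigma_T$, not merely on $\Ebar$, so one really does need polynomial convexity of $\Ebar$ together with the existence of polynomial-polyhedron neighbourhoods. Everything else --- the spectral mapping theorem for the Taylor spectrum, Oka--Weil, and the bookkeeping with sup-norms --- is routine.
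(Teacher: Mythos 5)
Your proposal is correct and follows essentially the same route as the paper: polynomial convexity of $\Ebar$ plus the spectral mapping theorem to force $\sigma_T(A,B,P)\subseteq\Ebar$, then Oka--Weil approximation together with continuity of the Taylor functional calculus to pass from polynomials to all of $\mathrm{rat}(\Ebar)$. The only difference is that you spell out the (genuine but routine) point about needing uniform convergence on a polynomially convex neighbourhood of the spectrum rather than merely on $\Ebar$, which the paper handles implicitly by citing Vasilescu's functional-calculus theorem.
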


\begin{proof}

If $(A, B, P)$ is a tetrablock contraction, then of course (\ref{pT}) just follows from definition.

The converse proof can be easily done by using polynomial
convexity of $\Ebar$. Indeed, if the Taylor spectrum $\sigma((A,
B, P))$ is not contained in $\Ebar$, then there is a point
$(\lambda_1, \lambda_2, \lambda_3)$ in $\sigma ((A, B, P))$ that
is not in $\Ebar$. By polynomial convexity of $\Ebar$, there is a
polynomial $f$ such that $ | f(\lambda_1, \lambda_2, \lambda_3) |
> \| p \|_{\infty, \Ebar}$. By polynomial spectral mapping
theorem,
$$ \sigma (f((A, B, P))) = \{ f(x_1, x_2, x_3 ) : (x_1, x_2, x_3 )
\in \sigma ((A, B, P)) \}$$ and hence the spectral radius of
$f((A, B, P))$ is bigger than $\| f \|_{\infty, \Ebar}$. But then $ \|
f((A, B, P) )\| > \| f \|_{\infty, \Ebar}$, contradicting the fact
that $\Ebar$ is a spectral set for $(A, B, P)$.

By polynomial convexity of $\Ebar$, a triple satisfying (\ref{pT}) will also satisfy
$$ \| f(A, B, P) \| \le \| f \|_{\infty, \Ebar}$$
for any function holomorphic in a neighbourhood of $\Ebar$.
Indeed, Oka-Weil theorem (Theorem 5.1 of \cite{oka-weil}) allows
us to approximate $f$ uniformly by polynomials. Then Theorem 9.9
of Chapter III of \cite{vasilescu} about functional calculus in
several commuting operators seals the rest of the deal.
\end{proof}

As a matter of detail, we note that the restriction of a tetrablock contraction to a joint invariant subspace
is a tetrablock contraction. Indeed, if $\mathcal M$ is a such a subspace,
and $f$ is any polynomial in three variables, then
$$ \| f(A|_{\mathcal M}, B|_{\mathcal M}, P|_{\mathcal M}) \| =  \| f((A, B, P))|_{\mathcal M}  \| \le
 \| f((A, B, P))  \| \le \| f \|_{\infty , \Ebar}.$$
The other thing that follows immediately from the lemma above is
that the adjoint triple $(A^*, B^*, P^*)$ is a tetrablock
contraction too.

The numerical radius $w(T)$ of a bounded operator $T$ features
naturally. It is defined as
$$ w(T) = \sup_{\| x \| \le 1} \mid \langle Tx, x\rangle \mid. $$
It is well known that $w(T)$ and $\| T \|$ define equivalent norms. Indeed,
$$ r(T) \le w(T) \le \| T \| \le 2 w(T)$$
for all bounded operators $T$ where $r(T)$ denotes the spectral
radius of $T$.

The main result of this section is the following theorem which
gives a chain of one way implications. The main content of the theorem
is the existence and uniqueness of soulutions of the fundamental
equations for a given tetrablock contraction.

\begin{defn}
For a commuting triple of contractions $\Tbar = (T_1, T_2, T_3)$,
the equations
\begin{equation} T_1 - T_2^* T_3 = D F_1 D, \mbox{ and } T_2 - T_1^* T_3 = D F_2 D \label{fe}
\end{equation}
are called the first fundamental equation and the second fundamental equation respectively.
\end{defn}

Given two bounded operators $X$ and $Y$, let $[X, Y]$ denote the commutator $XY - YX$.

\begin{thm} \label{chain} Let $A, B$ and $P$ be three commuting contractions
on a Hilbert space $ \mathcal H $. Then, in the following, $(1)
\Rightarrow (2) \Rightarrow (3) \Rightarrow (4)$.

\begin{enumerate}

\item
The triple $(A, B, P)$ is a tetrablock contraction.

\item The operator functions $\rho_1$ and $\rho_2$ defined by
$$ \rho_1((A, B, P) ) = I - P^* P + ( B^* B - A^* A ) - 2
\mbox{ Re } (B - A^* P) $$ and
$$ \rho_2((A, B, P) ) = I - P^* P + ( A^* A - B^* B ) - 2
\mbox{ Re } (A - B^* P). $$ satisfy
$$\rho_1 (A, zB, zP) \ge 0 \mbox{ and }\rho_2 (A, zB, zP) \ge 0 \mbox{ for all }z \in \Dbar.$$

\item For any $z \in \mathbb C$, if we define a pair of operators by
\begin{equation} \label{szpz} S_z = A + z B \mbox{ and }P_z = z P. \end{equation}
then $(S_z,P_z)$ is a $\Gamma$-contraction for every $z$ on the unit circle.

\item The fundamental equations (\ref{fe}) have unique solutions
    $F_1$ and $F_2$ in $\mathcal B ( \overline{\mbox Ran} D_P )$.
    Moreover, the $ \mathcal B ( \overline{\mbox Ran} D_P ))$ valued
    function $F_1 + z F_2$ has numerical radius not greater than
    $1$ for all $z \in \Dbar$.

    \end{enumerate}
    \end{thm}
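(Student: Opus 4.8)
The plan is to establish the chain of implications one link at a time, exploiting the bridge between the tetrablock and the symmetrized bidisc provided by Lemma \ref{neat}.

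\textbf{From (1) to (2).} Assume $(A,B,P)$ is a tetrablock contraction. By Lemma \ref{neat}, for each $z$ on the unit circle the pair $(S_z, P_z) = (A+zB, zP)$ should be a $\Gamma$-contraction; this is most cleanly obtained by noting that Lemma \ref{neat} says $(x_1,x_2,x_3)\in \Ebar$ iff $(x_1+zx_2, zx_3)\in\Gamma$ for all $z$ in the circle, so the polynomial von Neumann inequality for $\Ebar$ transfers to the one for $\Gamma$ applied to the pair $(S_z,P_z)$ (a polynomial $q$ in two variables pulls back to $q(x_1+zx_2,zx_3)$, a polynomial in three variables whose sup-norm over $\Ebar$ is controlled by the sup-norm of $q$ over $\Gamma$). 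Once $(S_z,P_z)$ is a $\Gamma$-contraction, part (2) of Theorem \ref{oldtheorem} gives $\rho(\beta S_z, \beta^2 P_z)\ge 0$ for all $\beta$ on the circle; choosing $\beta$ appropriately (e.g. $\beta=1$ after absorbing a unimodular rotation into $z$) and expanding $\rho(A+zB, zP)$ using $|z|=1$ produces exactly $\rho_1(A,zB,zP)\ge 0$, and swapping the roles of $A$ and $B$ (equivalently replacing $z$ by $\bar z$ and relabeling, or using $\Upsilon$ in place of $\Psi$) gives $\rho_2(A,zB,zP)\ge 0$. The extension from $|z|=1$ to $z\in\Dbar$ is then a routine maximum-principle / harmonicity argument on the operator-valued function, since $\rho_1(A,zB,zP)$ is the real part of an operator-valued analytic function of $z$.

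\textbf{From (2) to (3).} This is essentially a restatement: unwinding the definition of $\rho$, the inequality $\rho_1(A,zB,zP)\ge 0$ for $|z|=1$ is precisely $I - P_z^*P_z \ge \mathrm{Re}(S_z - S_z^* P_z)$, which by the equivalence (1)$\Leftrightarrow$(2) in Theorem \ref{oldtheorem} (applied with $\beta=1$, and noting $\rho_2$ handles the rotations $\beta$) says $\Gamma$ is a spectral set for $(S_z,P_z)$. One must be slightly careful that condition (2) of Theorem \ref{oldtheorem} requires the inequality for \emph{all} $\beta$ on the circle, not just $\beta=1$; this is where having \emph{both} $\rho_1\ge 0$ and $\rho_2\ge 0$ for all $z\in\Dbar$ pays off, as together they yield $\rho(\beta S_z,\beta^2 P_z)\ge 0$ for all $\beta$.

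\textbf{From (3) to (4).} Fix $z$ on the unit circle. Since $(S_z,P_z)$ is a $\Gamma$-contraction and $P_z = zP$ has the same defect operator as $P$ (because $P_z^*P_z = P^*P$, so $D_{P_z} = D_P$), part (3) of Theorem \ref{oldtheorem} gives a unique $\Phi_z\in\mathcal B(\overline{\mbox{Ran}}\,D_P)$ with $w(\Phi_z)\le 1$ solving $S_z - S_z^*P_z = D_P\,\Phi_z\,D_P$. Expanding the left side: $S_z - S_z^*P_z = (A+zB) - (A^*+\bar z B^*)zP = (A - B^*P) + z(B - A^*P)$ on the circle. I would argue that $z\mapsto\Phi_z$ depends affinely on $z$: the map is $z\mapsto \Phi_z$ with $D_P\Phi_z D_P = (A-B^*P) + z(B-A^*P)$, and by uniqueness together with averaging over the circle (or differentiating), $\Phi_z = F_1 + zF_2$ where $D_P F_1 D_P = A - B^*P$ and $D_P F_2 D_P = B - A^*P$ — wait, these are the fundamental equations with the roles of $T_1,T_2$ read off as $A,B$: indeed $T_1 - T_2^*T_3 = A - B^*P = DF_1D$ and $T_2 - T_1^*T_3 = B - A^*P = DF_2D$, matching (\ref{fe}). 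Uniqueness of $F_1,F_2$ follows from uniqueness of $\Phi_z$ for two distinct values of $z$ (say $z=1$ and $z=-1$, or $z=1$ and $z=i$), which determines $F_1$ and $F_2$ by solving a linear system; and $w(F_1+zF_2) = w(\Phi_z)\le 1$ for all $z$ on the circle, hence for all $z\in\Dbar$ by the maximum principle for the subharmonic function $z\mapsto w(F_1+zF_2)$ (numerical radius of an analytic operator function is subharmonic, being a sup of moduli of analytic scalar functions $\langle(F_1+zF_2)x,x\rangle$).

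\textbf{Main obstacle.} The delicate point is the passage (1)$\Rightarrow$(2): verifying that the polynomial von Neumann inequality on $\Ebar$ genuinely transfers to give the pair $(S_z,P_z)$ its $\Gamma$-contraction status uniformly in $z$, and in particular that one obtains \emph{both} $\rho_1$ and $\rho_2$ nonnegative rather than just one of them — this asymmetry-breaking is what the two functions $\Psi$ and $\Upsilon$ (equivalently parts (3) and (3$'$), or (4) and (4$'$), of Theorem \ref{AWY}) are for, and keeping track of which unimodular rotations $\beta$ and $z$ are being used is the part most likely to hide a sign or conjugation error. The extension from the circle to the disc in each step is routine (harmonicity/subharmonicity plus maximum principle), and the affine dependence $\Phi_z = F_1 + zF_2$ is forced by uniqueness, so those are not serious difficulties.
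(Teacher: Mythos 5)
Your step $(3)\Rightarrow(4)$ and your transfer argument showing that $(1)$ implies that each $(S_z,P_z)$ is a $\Gamma$-contraction are sound and close to the paper's. The genuine gap is in how you obtain statement (2): you try to derive $\rho_1(A,zB,zP)\ge 0$ from the $\Gamma$-contractivity of $(S_z,P_z)$, i.e.\ from $\rho(\beta S_z,\beta^2 P_z)\ge 0$. Writing $\Sigma_1=A-B^*P$ and $\Sigma_2=B-A^*P$, that inequality reads
$$2(I-P^*P)\;\ge\;2\,\mathrm{Re}\,\bigl(\beta(\Sigma_1+z\Sigma_2)\bigr),\qquad |\beta|=|z|=1,$$
whereas for $|z|=1$ the statement $\rho_1(A,zB,zP)\ge 0$ reads
$$I-P^*P+(B^*B-A^*A)\;\ge\;2\,\mathrm{Re}\,\bigl(z(B-A^*P)\bigr).$$
The term $B^*B-A^*A$ (and its negative in $\rho_2$) simply does not occur in $\rho(\beta S_z,\beta^2 P_z)$, and no choice of $\beta$, averaging over the circle, or ``absorbing a rotation into $z$'' will manufacture it: passing to $S_z=A+zB$ discards precisely the information that distinguishes $\rho_1$ and $\rho_2$ from $\rho$. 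In effect you are asserting the reverse implication $(3)\Rightarrow(2)$, which the theorem deliberately does not claim --- it is stated as a one-way chain. The same misidentification appears in your $(2)\Rightarrow(3)$ paragraph, where you say $\rho_1(A,zB,zP)\ge 0$ ``is precisely'' $I-P_z^*P_z\ge\mathrm{Re}(S_z-S_z^*P_z)$; it is not. The correct move there (and the paper's) is to \emph{add} the inequalities $\rho_1\ge 0$ and $\rho_2\ge 0$ so that the terms $\pm(B^*B-A^*A)$ cancel, and only after that cancellation does the $\rho$-inequality for $(S_z,P_z)$ emerge.

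The intended proof of $(1)\Rightarrow(2)$ does not pass through $\Gamma$ at all. One applies the spectral-set inequality directly to the function $\Psi(z,\cdot)$ of (\ref{functions}): for fixed $z\in\mathbb D$ it is holomorphic on a neighbourhood of $\Ebar$ with $\|\Psi(z,\cdot)\|_{\infty,\Ebar}\le 1$ by Theorem \ref{AWY}, so $\|\Psi(z,(A,B,P))\|=\|(zP-A)(I-zB)^{-1}\|\le 1$, and multiplying out $(\zbar P^*-A^*)(zP-A)\le(I-\zbar B^*)(I-zB)$ yields exactly $\rho_1(A,zB,zP)\ge 0$; the companion function $\Upsilon$ yields $\rho_2$, and continuity extends both to $\Dbar$. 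Your remaining steps --- summing the two $\rho$-inequalities for $(2)\Rightarrow(3)$, and for $(3)\Rightarrow(4)$ extracting $F_1$ and $F_2$ from the fundamental operator $F(z)$ of $(S_z,P_z)$ by uniqueness and circle-averaging, with the numerical radius bound extended to $\Dbar$ by subharmonicity --- agree with the paper.
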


\begin{proof}
$(1) \Rightarrow (2)$: It is enough to  prove that $\rho_1 (A, zB, zP) \ge 0$ for all $z \in \Dbar$. The proof for $\rho_2$ is the same.

Consider the function $\Psi$ defined in \ref{functions}. If $z \in
\mathbb D$, then $\Psi ( z , \cdot ) $ is a holomorphic function
on $E$ with
$$ \Psi ( z , (A, B, P) ) = (z P - A )(I - zB )^{-1}.$$
Because $\Ebar$ is a spectral set for $(A, B, P)$, we know that $ \|
\Psi ( z , (A, B, P) ) \| \le 1$ which in other words means that
$$  ( I - \zbar B^* )^{-1} ( \zbar P^* - A^* ) ( z P - A )
( I - z B )^{-1} \le  I $$
which translates to
$$ ( \zbar P^* - A^* ) ( z P - A ) \le  (I -
\zbar B^* )( I - z B) $$
and finally
$$  | z |^2 P^* P - \zbar P^* A - z A^* P +
A^* A \le  I - z B - \zbar B^* + | z |^2 B^*B$$ which is
nothing but $\rho_1 (A, z B, zP) \ge 0$. Since this holds
for all $z$ in the disk and since the function $\rho_1$ is
continuous, the inequality holds on $\Dbar$. That finishes the
proof of this step.

Lemma \ref{neat} showed that the terablock is
intimately connected with the symmetrised bidisc. In the
following, we shall see that the same is true for tetrablock
contractions and $\Gamma$-contractions. Many facets of the theory
of $\Gamma$-contractions will be used in this paper.

$(2) \Rightarrow (3)$:  One of the criteria for $(S_z,P_z)$ to be a $\Gamma$-contraction is that
$\rho(\alpha S_z , \alpha^2 P_z) \ge 0$ for all $\alpha$ on the unit
circle, see Theorem \ref{oldtheorem}.  Since we have
$$ \rho_1(A, z_1 B, z_1 P) \ge 0 \mbox{ and } \rho_2(A, z_2 B, z_2 P) \ge 0 $$
for all $z_1 $ and $z_2$ on the circle, adding these two, we get
$$ D_P^2  \ge  \mbox{ Re } ( z_1 \Sigma_1 + z_2 \Sigma_2 )
 =  \mbox{ Re } ( z_1 (\Sigma_1 + \overline{z_1} z_2 \Sigma_2
)).$$
So for all $\alpha$ and $z$ on the circle. we have
\begin{eqnarray*}
D_P^2 & \ge & \mbox{ Re } ( \alpha ( \Sigma_1 + z \Sigma_2 )) \\
& = & \mbox{ Re } ( \alpha ( A - B^* P + z ( B - A^* P )))\\
& = & \mbox{ Re } ( \alpha ( ( A + z B ) - (z A^* + B^*)P )) \\
& = & \mbox{ Re } ( \alpha ( ( A + z B ) - z ( A^* + \overline{z} B^*)P )) \\
& = & \mbox{ Re } ( \alpha ( ( A + z B ) - (A + z  B)^* z P )) . \end{eqnarray*}
Thus we have
$$ D_{P_z}^2 \ge \mbox{ Re } \alpha ( S_z - S_z^* P_z )$$
for all $\alpha$ on the circle which is the same as saying that
$\rho(\alpha S_z , \alpha^2 P_z) \ge 0$ for all $\alpha$ on the
unit circle. That finishes the proof.

$(3) \Rightarrow (4)$: The uniqueness part is the simplest.
Indeed, let $F_1$ and $F_1^\prime$ be two bounded operators on $
\overline{\mbox Ran} D_P $ both of which satisfy the first
fundamental equation $ \Sigma_1 = D_PXD_P$. Then $ F = F_1 -
F_1^\prime$ satisfies $D_PFD_P = 0$. Now, for $x$ and $y$ in $H$, we
have $\langle FD_P x , D_P y \rangle = \langle D_P F D_P x , y \rangle = 0$
and hence $F_1 - F_1^\prime = 0$. The existence part of this prof
starts from the fact that $(S_z, P_z )$ is a $\Gamma$-contraction
for any $z$ on the circle. Appeal to Theorem \ref{oldtheorem}
again. The $\Gamma$-contraction $(S_z, P_z )$ has a fundamental
operator, call it $F(z)$. It satisfies
$$ S_z - S_z^* P_z  =  D_{P_z} F(z) D_{P_z} \mbox{ and } w(F(z))
\le 1 $$ where $w$ stands for the numerical radius. Recalling what $S_z$ and $P_z$ are, the last equation
becomes
$$ \Sigma_1 + z \Sigma_2 = D_P F(z) D_P.$$
This holds for all $z$ on the unit circle. Integrating over the
unit circle, we see that $\Sigma_1 = D_P F_1 D_P$ where $F_1$ is the
integration of the function $F$ over the unit circle. Thus $$ z
\Sigma_2 = D_P (F(z) - F_1) D_P$$ for all $z$ on the unit circle and
hence putting $z=1$, we get that $\Sigma_2 = D_P F_2 D_P$ where $F_2 =
F(1) - F_1$. Thus we see that $F$ is a linear function $F(z) = F_1
+ z F_2$ for some bounded operators $F_1$ and $F_2$ on
$\overline{\mbox Ran} D_P $ and
$$  \Sigma_1  = D_P F_1 D_P \mbox{ and } \Sigma_2 = D_P F_2 D_P.$$ \end{proof}

\begin{rem}

By a computation similar to part (3), we have that $(zA + B, zP)$ is a $\Gamma$-contraction for $z$ on the unit circle. This can also be seen as a consequence of part (4). Indeed,
$$ zA + B - (zA + B)^* zP = zA + B - (\zbar A^* + B^*) zP = z(A - B^*P) + B - A^*P = D_P (zF_1 + F_2)D_P$$
thereby showing that the pair $(zA + B, zP)$ satisfies fundamental equation which is a necessary and sufficient condition for it to be a $\Gamma$-contraction.

\end{rem}

In the next section, we shall obtain a characterization of the fundamental operators.

\section{New results about $\Gamma$-contractions}

This section first proves new results about $\Gamma$ contractions. Then these results are applied to tetrablock contractions.

The fundamental operator $\Phi$ of a $\Gamma$-contraction $(S,P)$ is the unique bounded operator on
${\mathcal D}_P$  that satisfies $S - S^*P = D_P \Phi D_P$. The lemma below gives a
different characterization of $\Phi$.

\begin{lem}
The fundamental operator $\Phi$ of a $\Gamma$-contraction $(S,P)$ is the unique bounded linear
operator on ${\mathcal D}_P$ that satisfies the operator equation
\begin{equation} D_P S = X D_P + X^* D_P P. \label{alteqn} \end{equation}
\end{lem}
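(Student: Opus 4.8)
The plan is to show that equation (\ref{alteqn}) is equivalent to the fundamental equation $S - S^*P = D_P \Phi D_P$ together with the uniqueness assertion. First I would record the elementary identity linking $D_P$ and $D_P P$: since $P^*D_P^2 P = P^*(I - P^*P)P = P^* P - (P^*P)^2 = D_P^2 P^* P$ is not quite what is wanted, the cleaner route is to use $D_{P^*}P = P D_P$, equivalently $D_P P^* = P^* D_{P^*}$, which follows by functional calculus from $P(I-P^*P) = (I - PP^*)P$. I would also use that $D_P P$ maps into $\overline{\mathrm{Ran}}\,D_P$ (clear) and, dually, that $P^* D_P$ maps $\overline{\mathrm{Ran}}\,D_P$ into itself, so all the operators appearing in (\ref{alteqn}) genuinely act on $\mathcal D_P$.

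Next, for the forward direction, suppose $X = \Phi$ satisfies the fundamental equation. I would compute $D_P S - \Phi D_P - \Phi^* D_P P$ and show it annihilates a dense subspace. The idea is that for $h, h' \in \mathcal H$,
\[
\langle (D_P S - \Phi D_P - \Phi^* D_P P) h,\, D_P h' \rangle
\]
should reduce, after moving $D_P$ across and using $D_P^2 = I - P^*P$, to a combination of $\langle S h, h'\rangle$-type terms that matches $\langle (S - S^*P - D_P\Phi D_P) h, h'\rangle$ or its adjoint paired appropriately. Concretely I expect the key algebraic step to be: $\langle \Phi D_P h, D_P h'\rangle = \langle D_P\Phi D_P h, h'\rangle = \langle (S - S^*P)h, h'\rangle$ and $\langle \Phi^* D_P P h, D_P h'\rangle = \langle D_P\Phi D_P P h, h'\rangle^{-}$ rearranged to $\langle P h, (S-S^*P)h'\rangle$, and then one checks $\langle D_P S h, D_P h'\rangle = \langle (I-P^*P)Sh, h'\rangle = \langle Sh, h'\rangle - \langle PSh, Ph'\rangle$ splits accordingly using commutativity $SP = PS$. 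For the converse, given any bounded $X$ on $\mathcal D_P$ solving (\ref{alteqn}), I would multiply (\ref{alteqn}) on the left by $D_P$ to get $D_P^2 S = D_P X D_P + D_P X^* D_P P$, i.e. $(I - P^*P)S = D_P X D_P + D_P X^* D_P P$; separately, take adjoints of (\ref{alteqn}) and multiply appropriately to produce $S^* D_P = D_P X^* + D_P X P^*$ (using the $P/P^*$ intertwining above), then left-multiply by $P^* D_P$ and subtract, so the $X^*$-terms cancel and one is left with $S - S^* P = D_P X D_P$, forcing $X = \Phi$ by the uniqueness already proved for the fundamental equation. Uniqueness for (\ref{alteqn}) itself then follows since any two solutions both equal $\Phi$.

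The main obstacle I anticipate is the bookkeeping in the converse direction: combining (\ref{alteqn}) with its adjoint to eliminate the $X^*$ term cleanly requires the right intertwining relations between $D_P$, $D_{P^*}$, $P$ and $P^*$, and one must be careful that all compressions to $\mathcal D_P$ are legitimate (that $P^* D_P(\mathcal D_P) \subseteq \mathcal D_P$ and that $D_P X^* D_P P$ is interpreted consistently). A secondary subtlety is that (\ref{alteqn}) as written involves $D_P S$, where $S$ does not obviously leave $\mathcal D_P$ invariant, so the equation should be read as an identity of operators from $\mathcal H$ (or from $\mathcal D_P$, after noting $D_P S|_{\mathcal D_P}$ makes sense) into $\mathcal D_P$; pinning down this reading precisely is where I would be most careful. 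Once the intertwining lemmas are in place, the rest is a short pairing computation.
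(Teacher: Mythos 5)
Your forward direction is essentially the paper's own argument: form $G = D_PS - \Phi D_P - \Phi^*D_PP$, pair $Gh$ against $D_Ph'$, use $D_P\Phi D_P = S - S^*P$ together with its adjoint $D_P\Phi^*D_P = S^* - P^*S$ and the commutativity $SP=PS$, and conclude $G=0$ because $G$ takes values in $\overline{\mathrm{Ran}}\,D_P$. (Your worry about whether $S$ preserves $\mathcal D_P$ is a non-issue: the equation is an identity of operators from $\mathcal H$ into $\mathcal D_P$, and every term ends by landing in $\mathcal D_P$.) That half is sound.

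The converse is where there is a genuine gap. A local error first: the adjoint of $X^*D_PP$ is $P^*D_PX$, not $D_PXP^*$; the intertwining $PD_P=D_{P^*}P$ relates $D_P$ to $D_{P^*}$ and cannot move $P^*$ past $D_PX$. More seriously, the advertised cancellation does not yield $S-S^*P=D_PXD_P$. Carrying your plan out correctly: left-multiplying (\ref{alteqn}) by $D_P$ gives $(I-P^*P)S = D_PXD_P + D_PX^*D_PP$, while right-multiplying the adjoint identity $S^*D_P = D_PX^* + P^*D_PX$ by $D_PP$ gives $S^*(I-P^*P)P = D_PX^*D_PP + P^*D_PXD_PP$. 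Subtracting does kill the $X^*$ terms, but after using $SP=PS$ and $S^*P^*=P^*S^*$ what remains is
\begin{equation*}
(S-S^*P) - D_PXD_P \;=\; P^*\bigl[(S-S^*P) - D_PXD_P\bigr]P ,
\end{equation*}
and an identity $T=P^*TP$ does not by itself force $T=0$ (take $P$ unitary and $T$ commuting with it). To close the argument you must iterate to $T=P^{*n}TP^n$, write $T=D_P(\Phi-X)D_P$ using the already-known fundamental equation, and invoke $\|D_PP^nh\|\to 0$, which comes from the telescoping sum $\sum_n\|D_PP^nh\|^2=\|h\|^2-\lim_n\|P^nh\|^2<\infty$. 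This iteration-plus-limit step is precisely the engine of the paper's own uniqueness proof, where it is applied directly to $YD_P+Y^*D_PP=0$ with $Y=X-\Phi$ (no appeal to the fundamental equation needed). As written, your "subtract and the $X^*$ terms cancel, leaving $S-S^*P=D_PXD_P$" skips the essential step.
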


\begin{proof}

To see that $\Phi$ satisfies the equation \ref{alteqn}, we shall use a relation which originally appeared in the
proof of Theorem 4.3 of \cite{bpsr}. The relation says that if $\Phi$ is the fundamental
operator of a $\Gamma$-contraction $(S,P)$, then
\begin{equation} \label{relation} D_P S = \Phi D_P + \Phi^* D_P P. \end{equation}
The proof of this relation is simple. Let
$G=\Phi^*{D}_PP+\Phi{D}_p-{D}_pS $. Then $G$ is defined from
$\mathcal{H}\rightarrow\mathcal{D}_P$. Since $\Phi$ is the
fundamental operator of the $\Gamma$-contraction $(S,P)$, we have
$$ {D}_PG = {D}_P\Phi^* {D}_P P + {D}_P \Phi {D}_P - {{D}_P}^2S =(S^*-P^*S)P+(S-S^*P)-(I-P^*P)S=0.$$ Now $\langle
Gh,{D}_Ph' \rangle=\langle {D}_PGh,h' \rangle=0 \quad \mbox{for
all }h,h'\in\mathcal{H}$. This shows that $G=0$ and hence $\Phi^*
{D}_P P + \Phi {D}_P= {D}_PS$.

Conversely, let $X$ satisfy $ D_P S = X D_P + X^* D_P P.$ We need to show that $X = \Phi$.
Since we just proved that $\Phi$ satisfies the equation, we have $ \Phi {D}_P + \Phi^* {D}_P P =
{D}_P S = X D_P + X^* D_P P.$ Consequently, $(X - \Phi) D_P + (X - \Phi)^* D_P P = 0$.
Let $Y = X - \Phi$. So $YD_P + Y^* D_P P = 0$. We need to show that $Y = 0$. We have
\begin{eqnarray}
& & YD_P + Y^* D_P P  = 0 \nonumber \\
\mbox{ or } & & Y D_P = -Y^*D_P P \nonumber \\
\mbox{ or } & & D_P Y D_P = - D_P Y^* D_P P = P^* D_P Y D_P P = P^{*2} D_P Y D_P P^2 = \cdots \nonumber
\end{eqnarray}
Thus we have \begin{equation} D_P Y D_P = P^{*n} D_P Y D_P P^n
\label{iterate} \end{equation} for all $n=1,2, \ldots$. Now
consider the series
\begin{eqnarray*}
\sum_{n=0}^\infty \| D_P P^n h\|^2 & = & \sum_{n=0}^\infty \langle D_P P^n h , D_P P^n h \rangle \\
& = & \sum_{n=0}^\infty \langle P^{*n} D_P^2 P^n h , h \rangle \\
& = & \sum_{n=0}^\infty \langle P^{*n} (I - P^*P) P^n h , h \rangle \\
& = & \sum_{n=0}^\infty \langle (P^{*n} P^n - P^{*n+1} P^{n+1}h , h \rangle \\
& = & \sum_{n=0}^\infty ( \| P^nh \|^2 - \| P^{n+1} h \|^2 ) \\
& = & \| h \|^2 - \lim_{n\rightarrow \infty} \| P^n h\|^2.
\end{eqnarray*}
Now $\| h \| \ge \| Ph \| \ge \| P^2 h\| \ge \cdots \ge \| P^n h \| \ge \cdots \ge 0$.
So $\lim_{n\rightarrow \infty} \| P^n h\|^2$ exists. So the series is convergent.
So $ \lim_{n\rightarrow \infty} \| D_P P^n h\|^2 =0$. So
\begin{eqnarray*}
\| D_P Y D_P h \| & = & \| P^{*n} D_P Y D_P P^n h \| \mbox{ by } (\ref{iterate}) \\
& \le & \| P^{*n} \| \| D_p Y \| \| D_P P^n h \| \le \| D_p Y \| \| D_P P^n h \| \rightarrow 0. \end{eqnarray*}
So $ D_P Y D_P= 0$. So $Y = 0$. \end{proof}

\begin{cor} \label{twoneweqns}
The fundamental operators $F_1$ and $F_2$ of a tetrablock contraction $(A, B, P)$
are the unique bounded linear operators on the range closure of $D_P$ that satisfy the pair of operator equations
$$ D_P A = X_1 D_P + X_2^* D_P P \mbox{ and } D_P B = X_2 D_P + X_1^* D_P P.$$
\end{cor}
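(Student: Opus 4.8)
The plan is to deduce this from the preceding Lemma by applying it to the one-parameter family of $\Gamma$-contractions $(S_z, P_z) = (A + zB, zP)$, $|z| = 1$, furnished by Theorem~\ref{chain} and the Remark following it. Recall from there that for $z$ on the unit circle the pair $(S_z, P_z)$ is a $\Gamma$-contraction whose fundamental operator is $F_1 + zF_2$, i.e. $S_z - S_z^* P_z = D_P(F_1 + zF_2)D_P$; moreover, since $|z| = 1$ we have $P_z^* P_z = P^* P$ and hence $D_{P_z} = D_P$, so every operator in sight acts on $\overline{\mathrm{Ran}}\, D_P$.

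For the existence half, apply the preceding Lemma to $(S_z, P_z)$: its fundamental operator $F_1 + zF_2$ satisfies
$$ D_P S_z = (F_1 + zF_2) D_P + (F_1 + zF_2)^* D_P P_z .$$
Substituting $S_z = A + zB$, $P_z = zP$ and using $(F_1+zF_2)^* = F_1^* + \bar z F_2^*$ together with $|z|^2 = 1$, the right-hand side expands to $\big(F_1 D_P + F_2^* D_P P\big) + z\big(F_2 D_P + F_1^* D_P P\big)$, while the left-hand side is $D_P A + z D_P B$. Since this identity holds for every $z$ on the circle, evaluating at $z = 1$ and $z = -1$ (equivalently, integrating against $1$ and against $\bar z$ over the circle) and forming the sum and difference separates the two powers of $z$ and yields exactly
$$ D_P A = F_1 D_P + F_2^* D_P P \quad\text{and}\quad D_P B = F_2 D_P + F_1^* D_P P,$$
so the pair $(X_1, X_2) = (F_1, F_2)$ solves the stated system.

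For uniqueness, suppose bounded operators $X_1, X_2$ on $\overline{\mathrm{Ran}}\, D_P$ satisfy both equations. Reversing the computation above --- multiply the second equation by $z$, add it to the first, and use $|z|^2 = 1$ --- gives $D_P S_z = (X_1 + zX_2)D_P + (X_1 + zX_2)^* D_P P_z$ for every $z$ on the circle. Thus $X_1 + zX_2$ satisfies the operator equation of the preceding Lemma for the $\Gamma$-contraction $(S_z, P_z)$, so by the uniqueness assertion of that Lemma $X_1 + zX_2 = F_1 + zF_2$ for all unimodular $z$. Taking $z = 1$ and $z = -1$ and once more forming the sum and difference gives $X_1 = F_1$ and $X_2 = F_2$.

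I do not expect a genuine obstacle here: the substance is already contained in the preceding Lemma and in the identification --- carried out in Theorem~\ref{chain} and the Remark after it --- of $F_1 + zF_2$ as the fundamental operator of $(A + zB, zP)$. The only points that need a moment's care are the harmless observation that $D_{zP} = D_P$ when $|z| = 1$ and the bookkeeping of the adjoint $(F_1 + zF_2)^* = F_1^* + \bar z F_2^*$ combined with $|z|^2 = 1$ when collecting the coefficients of $z^0$ and $z^1$.
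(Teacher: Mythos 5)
Your proposal is correct and follows essentially the same route as the paper: both deduce the identities by applying the preceding lemma to the $\Gamma$-contractions $(A+zB, zP)$ with fundamental operator $F_1+zF_2$, expanding the resulting relation, and separating the coefficients of $z^0$ and $z^1$ over the unit circle, with uniqueness obtained by reversing the computation and invoking the uniqueness clause of that lemma. Your explicit evaluation at $z=1$ and $z=-1$ is just a concrete realization of the paper's appeal to the identity holding for all unimodular $z$.
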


\begin{proof}
Consider the $\Gamma$-contraction $(S_z, P_z)$ defined in (\ref{szpz}).
Its fundamental operator is $F_1 + zF_2$. Thus by the first part of the proof the lemma above, we have
$$D_P (A + zB) = (F_1 + zF_2) D_P + (F_1 + zF_2)^* D_P zP = F_1 D_P + F_2^* D_P P + z( F_2 D_P + F_1^* D_P P)$$
because $z$ comes from the unit circle. Hence $F_1$ and $F_2$ satisfy the given operator equations.
Now take a pair $(X_1, X_2)$ that satisfies the equations. Then $X_1 + zX_2$ satisfies (\ref{alteqn})
for the $\Gamma$-contraction $(S_z. P_z)$. Indeed, remembering that $|z| = 1$, we have
\begin{eqnarray*} D_P S_z = D_P (A + zB) = D_PA + zD_PB & = & X_1 D_P + X_2^* D_P P + z( X_2 D_P + X_1^* D_P P) \\
& = & X_1 D_P + z X_2 D_P + X_2^* D_P P +  + z X_1^* D_P P \\
& = & (X_1 + z X_2) D_P + (X_1 + z X_2)^* D_P zP.\end{eqnarray*}
By uniqueness, $X_1 + zX_2 = F_1 + zF_2$. Since this holds for all $z$ on the unit circle,
we have $X_1 = F_1$ and $X_2 = F_2$. \end{proof}

\begin{lem} \label{newresult}
Let $S_1, S_2$ and $P$ be three commuting bounded operators such
that $(S_1, P)$ and $(S_2, P)$ are $\Gamma$-contractions with
commuting fundamental operators $\Phi_1$ and $\Phi_2$. Then
$$ S_1^* S_2 - S_2^* S_1 = D_P (F_1^* F_2 - F_2^* F_1)
    D_P.$$
\end{lem}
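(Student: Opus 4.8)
The plan is to combine the two descriptions of the fundamental operator of a $\Gamma$-contraction that are now available: the fundamental equation $S_i - S_i^* P = D_P \Phi_i D_P$ of Theorem \ref{oldtheorem}, and the identity $D_P S_i = \Phi_i D_P + \Phi_i^* D_P P$ established in the previous lemma. The commutativity hypothesis $\Phi_1 \Phi_2 = \Phi_2 \Phi_1$ is used only at the last step, to kill a leftover commutator.

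First I would use the fundamental equations to substitute $S_i = S_i^* P + D_P \Phi_i D_P$ for $i = 1, 2$ into $S_1^* S_2 - S_2^* S_1$. Since $S_1$ and $S_2$ commute, $S_1^* S_2^* = (S_2 S_1)^* = (S_1 S_2)^* = S_2^* S_1^*$, so the two terms $S_1^* S_2^* P$ and $S_2^* S_1^* P$ cancel, leaving
\[ S_1^* S_2 - S_2^* S_1 = S_1^* D_P \Phi_2 D_P - S_2^* D_P \Phi_1 D_P . \]
Next I would rewrite $S_i^* D_P$ by taking adjoints in the identity of the previous lemma, which gives $S_i^* D_P = D_P \Phi_i^* + P^* D_P \Phi_i$. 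Substituting this and multiplying out,
\[ S_1^* S_2 - S_2^* S_1 = D_P(\Phi_1^* \Phi_2 - \Phi_2^* \Phi_1) D_P + P^* D_P(\Phi_1 \Phi_2 - \Phi_2 \Phi_1) D_P , \]
and the second summand vanishes by hypothesis, which is exactly the asserted identity.

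I do not expect a serious obstacle here; the computation is short once the two characterizations of $\Phi_i$ are in hand. The only point that needs a little care is the bookkeeping of domains — all the products $\Phi_i^*\Phi_j$ and $\Phi_i\Phi_j$ are formed on $\mathcal{D}_P = \overline{\mbox{Ran}}\,D_P$, with the usual extension by zero to $\mathcal{H}$ understood — together with making sure that the hypothesis genuinely being used is the commutativity of $\Phi_1$ with $\Phi_2$, not of $\Phi_1$ with $\Phi_2^*$. As a consistency check, both sides of the claimed identity are skew-adjoint, and both collapse to $0$ when $S_1 = S_2$.
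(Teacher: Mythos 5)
Your proposal is correct and follows essentially the same route as the paper: both proofs combine the fundamental equation $S_i - S_i^*P = D_P\Phi_i D_P$ with the adjoint of the relation $D_PS_i = \Phi_i D_P + \Phi_i^* D_P P$, cancel the $S_1^*S_2^*P$ terms via commutativity of $S_1$ and $S_2$, and invoke $[\Phi_1,\Phi_2]=0$ only to kill the residual term $P^*D_P(\Phi_1\Phi_2-\Phi_2\Phi_1)D_P$. The only difference is that the paper computes $S_2^*S_1 - S_1^*S_2$ rather than its negative, which is immaterial.
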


\begin{proof}

Using commutativity of $S_1$ and $S_2$, we get $S_2^*S_1^* P =
S_1^* S_2^* P$. Now we use the fundamental equation for
$\Gamma$-contractions to get that
$$ S_2^* (S_1 - D_P \Phi_1 D_P) = S_1^* (S_2 - D_P \Phi_2 D_P).$$
Thus, using (\ref{relation}), we get that $ S_2^* S_1 - S_1^* S_2$
is the same as $(D_P \Phi_2^* + P^* D_P \Phi_2) \Phi_1 D_P - (D_P
\Phi_1^* + P^* D_P \Phi_1) \Phi_2 D_P$ which is equal to $ D_P
(\Phi_2^* \Phi_1 - \Phi_1^* \Phi_2) D_P$ in view of commutativity
of the fundamental operators. That proves the lemma.

\end{proof}

\begin{cor}
\label{TandF} Let $(A, B, P)$ be a tetrablock contraction with
commuting fundamental operators $F_1$ and $F_2$. Then
$$A^* A - B^* B = D ( F_1^* F_1 - F_2^* F_2 )D.$$
\end{cor}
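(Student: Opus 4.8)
The plan is to apply Lemma~\ref{newresult} to a well-chosen pair of $\Gamma$-contractions that share the same ``$P$''-slot. Fix a scalar $z$ on the unit circle. By Theorem~\ref{chain} the pair $(A+zB,\,zP)$ is a $\Gamma$-contraction with fundamental operator $F_1+zF_2$, and by the remark following Theorem~\ref{chain} the pair $(zA+B,\,zP)$ is a $\Gamma$-contraction with fundamental operator $zF_1+F_2$ (it is the fundamental operator by uniqueness, since it satisfies the fundamental equation and, writing $zF_1+F_2=z(F_1+\overline z F_2)$ with $\overline z\in\Dbar$, Theorem~\ref{chain} gives it numerical radius at most $1$). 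Because $A$ and $B$ commute, $A+zB$ and $zA+B$ commute, and both commute with $zP$; because $F_1$ and $F_2$ commute by hypothesis, so do $F_1+zF_2$ and $zF_1+F_2$, their commutator being $(1-z^2)(F_1F_2-F_2F_1)=0$. Finally $D_{zP}=D_P$ since $|z|=1$, so all of these fundamental operators act on $\overline{\mathrm{Ran}}\,D_P$.

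Thus Lemma~\ref{newresult} applies to the three commuting operators $A+zB$, $zA+B$, $zP$ and yields
\[
(A+zB)^*(zA+B)-(zA+B)^*(A+zB)=D_P\bigl[(F_1+zF_2)^*(zF_1+F_2)-(zF_1+F_2)^*(F_1+zF_2)\bigr]D_P .
\]
The next step is a routine bilinear expansion of both sides using $\overline z z=1$. On the left, $(A+zB)^*(zA+B)=zA^*A+A^*B+B^*A+\overline z B^*B$ and $(zA+B)^*(A+zB)=\overline z A^*A+A^*B+B^*A+zB^*B$, so the cross terms $A^*B$ and $B^*A$ cancel and the left side collapses to $(z-\overline z)(A^*A-B^*B)$. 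The same expansion on the right, with $F_1,F_2$ in the roles of $A,B$, gives $(z-\overline z)\,D_P(F_1^*F_1-F_2^*F_2)D_P$. Hence $(z-\overline z)(A^*A-B^*B)=(z-\overline z)\,D_P(F_1^*F_1-F_2^*F_2)D_P$ for every $z$ on the unit circle; taking $z=i$ and cancelling the nonzero scalar $z-\overline z$ yields $A^*A-B^*B=D_P(F_1^*F_1-F_2^*F_2)D_P$, which is the assertion with $D=D_P$.

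There is no serious obstacle here; the only point that takes thought is the choice of pair fed into Lemma~\ref{newresult}. The pair $(A+zB,\,zP)$, $(zA+B,\,zP)$ is essentially forced: it must have a common second component so that the lemma is applicable, its fundamental operators are already available in closed form, and the antisymmetric combination the lemma produces must carry scalar coefficients that isolate $A^*A-B^*B$ (and $F_1^*F_1-F_2^*F_2$) rather than the symmetric combination $A^*A+B^*B$ that a single pair such as $(A+zB,zP)$ would deliver. One should also keep in mind throughout that Lemma~\ref{newresult} is being invoked with $zP$, not $P$, in its distinguished slot, which is harmless precisely because $D_{zP}=D_P$ when $|z|=1$.
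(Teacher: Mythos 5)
Your proposal is correct and follows essentially the same route as the paper: the author also applies Lemma~\ref{newresult} with $S_1=A+zB$, $S_2=zA+B$ and $zP$ in the contraction slot, expands both sides to get the factor $(z-\zbar)$, and cancels. You have merely filled in the details the paper leaves implicit (identifying $zF_1+F_2$ as the fundamental operator of $(zA+B,zP)$, checking that the two fundamental operators commute, and noting $D_{zP}=D_P$), all of which are accurate.
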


\begin{proof}

This follows from the Lemma \ref{newresult}. With $S_1 = A +
zB, S_2 = zA + B$ and $P = z P$, we shall get
$$ S_1^* S_2 - S_2^* S_1 = (z - \zbar) (A^* A - B^* B) \mbox{ and }
\Phi_1^* \Phi_2 - \Phi_2^* \Phi_1 = (z - \zbar)(F_1^* F_1 - F_2^* F_2).$$
That finishes the proof.
\end{proof}

 Armed with the fundamental operators $F_1$
and $F_2$ of a tetrablock contraction $(A, B, P)$, we are ready to
investigate those tetrablock contractions which are special. A
unitary operator is a special kind of contraction because it is
normal and its spectrum is contained in the unit circle. Indeed,
that is a characterization. The next section completely unravels
the structure of a commuting triple which consists of normal
operators and whose Taylor joint spectrum is contained in the
distinguished boundary of $E$.

\section{Tetrablock unitaries and tetrablock isometries}

The beginning of this section warrants a discussion on what
the distinguished boundary of a domain $\Omega$ is. Any study of a dilation involves those special tuples of operators
whose joint spectrum is contained in the distinguished boundary
$b \Omega$ of $\Omega$. Let $A(\Omega)$ be the algebra of continuous
scalar functions on $\overline{\Omega}$ that are holomorphic on $\Omega$.
A boundary for $\Omega$ is a subset $C$ of $\Omega$ such that every function
in $A(\Omega)$ attains its maximum modulus on $C$. It is well-known that for a
polynomially convex $\Omega$, there is a smallest closed boundary of
$\Omega$, contained in all the closed boundaries of $\Omega$. This is called the distinguished boundary
of $\Omega$. We need characterizations of the distinguished boundary of $\Ebar$ and this is given in \cite{awy}. We quote parts of Theorem 7.1 from there.

\begin{thm}[Abouhajar, White, Young] \label{bE}

For $\xunderbar = (x_1, x_2, x_3) \in \mathbb C^3$, the following are equivalent.

\begin{enumerate}

\item $\xunderbar \in bE$,

\item $x_1 = \xoverbar_2 x_3, |x_3| = 1$ and $|x_2| \le 1$;

\item there exists a $2 \times 2$ unitary matrix $U$ such that $x = \pi(U)$;

\item there exists a symmetric $2 \times 2$ unitary matrix $U$ such that $x = \pi(U)$;

\item $\xunderbar \in \Ebar$ and $|x_3| = 1.$

\end{enumerate} \end{thm}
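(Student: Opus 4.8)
The statement to prove is the equivalence of the five conditions characterizing $bE$ in Theorem \ref{bE}. Since this is quoted verbatim from Abouhajar--White--Young, the natural thing is to either cite their Theorem 7.1 directly or reconstruct the argument. I would organize the proof as a cycle of implications, using the earlier membership criteria (Theorem \ref{AWY}) and Lemma \ref{neat} as the engine, together with the general fact that for a polynomially convex set the distinguished boundary is characterized by: $\xunderbar \in b\Omega$ iff $\xunderbar \in \overline\Omega$ and $\xunderbar$ is not in the interior of $\overline\Omega$ relative to the way it can be ``pushed in'' by an automorphism --- more concretely, $\xunderbar \in bE$ iff $\xunderbar \in \Ebar$ and $\xunderbar$ lies in the Shilov boundary, which here can be detected peeling function by function.

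The cleanest route uses Lemma \ref{neat}: $\xunderbar \in \Ebar$ iff $(x_1 + zx_2, zx_3) \in \Gamma$ for all $z$ on the circle, combined with the known structure of $b\Gamma = \{(z_1+z_2, z_1z_2): |z_1|=|z_2|=1\}$. A point of $\Gamma$ lies in $b\Gamma$ precisely when $|x_3|=1$ in the second coordinate of the symmetrized pair. So I would first show (5) $\Rightarrow$ (1): if $\xunderbar \in \Ebar$ with $|x_3|=1$, then for each $z$ on the circle $(x_1+zx_2, zx_3) \in \Gamma$ has modulus-one second coordinate, hence is in $b\Gamma$; one then checks that a function in $A(E)$ restricted along these slices attains its max where the $\Gamma$-functions do, giving $\xunderbar \in bE$. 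For (1) $\Rightarrow$ (2): if $\xunderbar \in bE$, then in particular $\xunderbar \in \Ebar$, and using the sharp inequalities of Theorem \ref{AWY}(3),(3'),(6) together with the fact that a boundary point cannot satisfy any of the strict versions, one forces $|x_1-\xoverbar_2 x_3| + |x_1x_2 - x_3| = 1 - |x_2|^2$ and the analogous equalities; feeding these into criterion (2) of Theorem \ref{AWY} (the $\Psi$ or $\Upsilon$ characterization) and using that $\|\Psi(\cdot,\xunderbar)\|_{H^\infty}$ must equal $1$ and be attained, a short computation with the Möbius-type expression $\Psi(z,\xunderbar) = (x_3z - x_1)/(x_2 z - 1)$ yields $|x_3| = 1$ and then $x_1 = \xoverbar_2 x_3$ and $|x_2| \le 1$.

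For (2) $\Leftrightarrow$ (3) $\Leftrightarrow$ (4): given $x_1 = \xoverbar_2 x_3$, $|x_3| = 1$, $|x_2| \le 1$, I would exhibit an explicit unitary (indeed symmetric unitary) $U$ with $\pi(U) = \xunderbar$: write $x_3 = e^{i\theta}$, and construct $U = e^{i\theta/2}\, V$ where $V$ is a symmetric unitary with prescribed $(1,1)$ and $(2,2)$ entries $e^{-i\theta/2}x_1$ and $e^{-i\theta/2}x_2$ --- this amounts to solving $|a_{11}|^2 + |a_{12}|^2 = 1$ etc., which is possible exactly because $|x_2| \le 1$ forces $|a_{22}| \le 1$ and the constraint $x_1 = \xoverbar_2 x_3$ makes the two diagonal entries compatible. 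Conversely $\pi$ of any ($2\times 2$) unitary obviously gives $|x_3| = |\det U| = 1$, $x_1 = \xoverbar_2 x_3$ (a direct computation with $U^* = U^{-1} = (\det U)^{-1}\operatorname{adj}U$), and $|x_2| \le \|U\| = 1$, giving (2). Finally (2) or (3) $\Rightarrow$ (5) is immediate since $\pi(U) \in \Ebar$ by Theorem \ref{AWY}(7) and $|x_3| = |\det U| = 1$.

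\textbf{Main obstacle.} The delicate step is (1) $\Rightarrow$ (2): extracting the exact equalities from mere membership in the Shilov boundary, and in particular ruling out $|x_3| < 1$. The argument needs that if $|x_3| < 1$ then $\xunderbar$ lies in the \emph{interior} of $\Ebar$ in the sense that every $A(E)$-function's modulus is dominated strictly, which requires either an open-mapping/peak-point argument or the explicit observation that for $|x_3| < 1$ one can perturb $\xunderbar$ in all directions within $\Ebar$ (via part (9) of Theorem \ref{AWY}, wiggling $\beta_1, \beta_2, x_3$), so $\xunderbar$ cannot be a local max of $|x_3|$ itself, which is in $A(E)$. Making this rigorous --- i.e. genuinely identifying $bE$ as the Shilov boundary rather than just a boundary --- is where the real work lies, and it is precisely the content of the Abouhajar--White--Young theorem; for the purposes of this note I would state it and cite \cite{awy} for the full proof.
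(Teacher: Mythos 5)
The paper does not prove this theorem at all: it is imported verbatim from Abouhajar--White--Young (Theorem 7.1 of \cite{awy}), and the text says only ``we quote parts of Theorem 7.1 from there.'' Your proposal ultimately does the same thing for the substantive content --- the identification of $bE$ as the Shilov boundary, i.e.\ the implications involving condition (1) --- and you are right that this is where all the work lies; deferring it to \cite{awy} is exactly what the paper does. Your sketches of the elementary equivalences $(2)\Leftrightarrow(3)\Leftrightarrow(4)$ and $(2)\Rightarrow(5)$ are correct (the explicit symmetric unitary with prescribed diagonal and determinant exists precisely under $x_1=\xoverbar_2x_3$, $|x_3|=1$, $|x_2|\le 1$, and the adjugate identity $U^*=(\det U)^{-1}\mathrm{adj}\,U$ gives the converse). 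One caution: your slicing argument for $(5)\Rightarrow(1)$ --- passing to $(x_1+zx_2,zx_3)\in b\Gamma$ and ``checking that $A(E)$-functions attain their maxima where the $\Gamma$-functions do'' --- is not rigorous as stated, and in this paper the logical order is the reverse: Lemma \ref{neatbdry} is \emph{deduced from} Theorem \ref{bE}, so using the slice characterization to prove Theorem \ref{bE} would be circular within the paper's own development. Since you explicitly fall back on citing \cite{awy} for that step, the proposal is acceptable and matches the paper's treatment.
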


Motivated by Lemma \ref{neat}, it is natural to ask whether an analogous
characterization holds for the distinguished boundary and the answer is yes.

\begin{lem} \label{neatbdry}

A triple $\xunderbar = (x_1, x_2, x_3) \in bE$ if and only if $(x_1 + zx_2 , zx_3)
\in b\Gamma$ for all $z$ from the unit circle. \end{lem}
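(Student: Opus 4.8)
The plan is to mimic the structure of the proof of Lemma~\ref{neat}, but now upgrade the strict membership conditions to boundary conditions using the characterizations of $bE$ in Theorem~\ref{bE}(2) and of $b\Gamma$. Recall $b\Gamma = \{(z_1+z_2, z_1 z_2) : |z_1| = |z_2| = 1\}$, and the convenient scalar criterion: a pair $(s,p)$ lies in $b\Gamma$ if and only if $|p| = 1$, $|s| \le 2$, and $s = \bar s p$ (this is the boundary analogue of \eqref{sp}; it is immediate from the description of $b\Gamma$ and is what one should state at the outset of the proof). Writing $s_z = x_1 + z x_2$ and $p_z = z x_3$ as before, the claim becomes: $x_1 = \bar x_2 x_3$, $|x_3| = 1$, $|x_2| \le 1$ holds if and only if for every $z$ on the circle we have $|z x_3| = 1$, $|x_1 + z x_2| \le 2$, and $x_1 + z x_2 = \overline{(x_1 + z x_2)}\, z x_3$.

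For the forward direction I would assume $\xunderbar \in bE$, i.e.\ use Theorem~\ref{bE}(2). Then $|p_z| = |z x_3| = |x_3| = 1$ is clear. The bound $|s_z| = |x_1 + z x_2| \le |x_1| + |x_2|$; since $|x_3| = 1$ and $x_1 = \bar x_2 x_3$ give $|x_1| = |x_2| \le 1$, we get $|s_z| \le 2$. The only substantive point is verifying $s_z = \bar s_z p_z$, i.e.\ $x_1 + z x_2 = (\bar x_1 + \bar z \bar x_2) z x_3 = z x_3 \bar x_1 + \bar x_2 x_3$. Using $x_1 = \bar x_2 x_3$ the constant terms match, so it remains to check $z x_2 = z x_3 \bar x_1$, i.e.\ $x_2 = x_3 \bar x_1$; but this is the conjugate-and-multiply-by-$x_3$ form of $x_1 = \bar x_2 x_3$ together with $|x_3| = 1$ (from $x_1 = \bar x_2 x_3$ we get $\bar x_1 = x_2 \bar x_3 = x_2 x_3 / |x_3|^2 = x_2 x_3$, hence $x_3 \bar x_1 = x_2 x_3 \bar x_3 \cdot x_3 = x_2$ after using $|x_3|=1$; this is the routine computation I would not belabor). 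So $(s_z, p_z) \in b\Gamma$ for all $z$.

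For the converse I assume $(s_z, p_z) \in b\Gamma$ for all unimodular $z$. From $|p_z| = |z x_3| = 1$ (for any single $z$) we immediately get $|x_3| = 1$. From $|s_z| \le 2$ for all $z$ and choosing $z$ so that $|x_1 + z x_2| = |x_1| + |x_2|$ we get $|x_1| + |x_2| \le 2$. The key identity $s_z = \bar s_z p_z$ reads $x_1 + z x_2 = z x_3 \bar x_1 + \bar x_2 x_3$ for all $z$ on the circle; since both sides are affine in $z$ and agree on the whole circle, the constant and $z$-coefficients must match separately, yielding $x_1 = \bar x_2 x_3$ and $x_2 = x_3 \bar x_1$. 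The first of these is exactly the first condition in Theorem~\ref{bE}(2); moreover it forces $|x_1| = |x_2|$ (using $|x_3| = 1$), so $|x_1| + |x_2| \le 2$ upgrades to $|x_2| \le 1$. Thus all three conditions of Theorem~\ref{bE}(2) hold and $\xunderbar \in bE$.

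The main obstacle, such as it is, is purely bookkeeping: one must be careful that the two relations $x_1 = \bar x_2 x_3$ and $x_2 = x_3 \bar x_1$ are genuinely equivalent given $|x_3| = 1$ (so that matching the affine-in-$z$ coefficients does not over-determine the triple), and one must correctly extract $|x_2| \le 1$ rather than merely $|x_1| + |x_2| \le 2$. An alternative, slicker route avoiding even the scalar criterion: feed $(x_1 + z x_2, z x_3) \in \Gamma$ for all $z$ into Lemma~\ref{neat} to conclude $\xunderbar \in \Ebar$ (the non-strict version), then observe $|x_3| = 1$ is equivalent to $b\Gamma$ being hit rather than the interior $G$ for any one $z$, and invoke Theorem~\ref{bE}(5): $\xunderbar \in \Ebar$ together with $|x_3| = 1$ already gives $\xunderbar \in bE$. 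Conversely Theorem~\ref{bE}(5) plus Lemma~\ref{neat} plus the observation that $|z x_3| = 1$ pushes each $(s_z, p_z)$ from $\Gamma$ onto $b\Gamma$ handles the forward direction. I would present the short route via Theorem~\ref{bE}(5) as the main argument and relegate the direct coefficient-matching computation to a remark.
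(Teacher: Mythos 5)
Your preferred ``short route'' --- Lemma~\ref{neat} to place $\xunderbar$ in $\Ebar$ (resp.\ $(s_z,p_z)$ in $\Gamma$), combined with the facts that $|p_z|=1$ upgrades $\Gamma$-membership to $b\Gamma$-membership and that $|x_3|=1$ upgrades $\Ebar$-membership to $bE$-membership via Theorem~\ref{bE}(5) --- is exactly the paper's proof, and your direct coefficient-matching argument via Theorem~\ref{bE}(2) is a correct (if redundant) alternative. The only blemish is a cosmetic slip in the latter: the intermediate identity should read $\overline{x_1}=x_2\overline{x_3}$ rather than $\overline{x_1}=x_2x_3$, though your conclusion $x_3\overline{x_1}=x_2$ is right.
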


\begin{proof}

Indeed, if $(x_1,x_2,x_3) \in bE$, then we know from Lemma \ref{neat}
that $(s_z, p_z) \in \Gamma$ and we know from Theorem \ref{bE} that $ | p_z | = 1$.
These two together form a characterization for $(s_z, p_z)$ to be in $b\Gamma$
(see Theorem 1.3 of \cite{ay-jot}). Conversely, if for every $z$ on the unit circle,
$(s_z, p_z) \in b\Gamma$, then by Lemma \ref{neat}, we have $(x_1,x_2,x_3) \in E$.
That along with the fact $| x_3 | = 1$ implies that $(x_1,x_2,x_3)$ has to be in $bE$
by part (5) of Theorem \ref{bE}. \end{proof}

We begin the study of those tetrablock contractions which are
special in the same sense that unitaries are special among
contractions. So these special tetrablock contractions are the candidates for dilation.

\begin{defn} \label{tunitary}
A tetrablock unitary is a commuting triple of normal operators
$\Nbar = (N_1, N_2, N_3)$ such that its Taylor joint spectrum
$\sigma ( \Nbar )$ is contained in $bE$.
\end{defn}
\begin{thm}
\label{tu}

Let $\Nbar = (N_1, N_2, N_3)$ be a commuting triple of bounded operators. Then the following are equivalent:

\begin{enumerate}

\item $\Nbar$ is a tetrablock unitary,

\item $N_3$ is a unitary, $N_2$ is a contraction and $N_1 = N_2^* N_3$,

\item there is a $2 \times 2$ unitary block operator matrix $[
    U_{ij} ]$ where $U_{ij}$ are commuting normal operators
    and $\Nbar = (U_{11}, U_{22}, U_{11} U_{22} - U_{21}
    U_{12})$,

\item $N_3$ is a unitary and $\Nbar$ is a tetrablock
    contraction,

    \item the family $\{ (R_z , U_z) : |z| = 1\}$ where $R_z = N_1 + zN_2$
        and $U_z = zN_3$ is a commuting family of $\Gamma$-unitaries.

\end{enumerate}
\end{thm}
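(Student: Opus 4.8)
The plan is to prove the cycle of implications $(1) \Rightarrow (2) \Rightarrow (3) \Rightarrow (4) \Rightarrow (5) \Rightarrow (1)$, leaning heavily on Theorem \ref{bE} for the scalar facts about $bE$, on the spectral mapping theorem to transport them to the operator level, and on Theorem \ref{oldtheorem} together with the routine facts about $\Gamma$-contractions for the $\Gamma$-unitary part.

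First I would establish $(1) \Rightarrow (2)$. Assuming $\Nbar$ is a tetrablock unitary, the commuting normal triple admits a joint spectral measure supported on $\sigma(\Nbar) \subseteq bE$. By Theorem \ref{bE}(2), at every point $\xunderbar$ of $bE$ we have $x_1 = \xoverbar_2 x_3$, $|x_3| = 1$, $|x_2| \le 1$. Integrating $|x_3|^2 = 1$ against the spectral measure gives $N_3^* N_3 = I$ and, by normality, $N_3 N_3^* = I$, so $N_3$ is unitary; integrating $|x_2|^2 \le 1$ gives $\|N_2\| \le 1$; and integrating the identity $x_1 = \xoverbar_2 x_3$ gives $N_1 = N_2^* N_3$ via the joint functional calculus. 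For $(2) \Rightarrow (3)$: given $N_3$ unitary, $N_2$ a contraction commuting with $N_3$, and $N_1 = N_2^* N_3$, I would produce a $2\times 2$ unitary block matrix. The natural guess is to mimic the scalar picture: take $U_{11} = N_2^* N_3 = N_1$, $U_{22} = N_2$, and fill in the off-diagonal entries using the defect operator $D_{N_2} = (I - N_2^* N_2)^{1/2}$ — for instance $U_{12} = D_{N_2} V$ and $U_{21} = -N_3 V^* D_{N_2}$ for a suitable unitary $V$ (one can even take $V = N_3$ up to bookkeeping), so that the block matrix is a unitary dilation-type object whose entries all commute because everything is built from the commuting normal operators $N_2, N_2^*, N_3$. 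Then a direct computation checks $\det$-type entry $U_{11}U_{22} - U_{21}U_{12} = N_3$, recovering $\Nbar = (N_1, N_2, N_3)$; one must verify the $2\times2$ operator matrix is indeed unitary, which reduces to the contraction/defect identities for $N_2$.

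Next, $(3) \Rightarrow (4)$ should be quick: from a $2\times 2$ unitary block matrix with commuting normal entries, $\pi$ applied entrywise lands in $bE$ pointwise (Theorem \ref{bE}(3)), and since all entries are commuting normals, $\Nbar$ is a commuting normal triple; the joint spectrum then lies in $bE$ by the spectral mapping theorem applied to the commuting normal family $\{U_{ij}\}$, and in particular $N_3 = U_{11}U_{22} - U_{21}U_{12}$ is unitary (it is normal with spectrum on the circle) and $\Nbar$ is a tetrablock contraction because $\Ebar$ is a spectral set for any normal tuple with spectrum in $\Ebar \supseteq bE$. For $(4) \Rightarrow (5)$: assuming $N_3$ unitary and $\Nbar$ a tetrablock contraction, Theorem \ref{chain} (applied to the tetrablock contraction $\Nbar$, noting its hypotheses only require commuting contractions) gives that $(R_z, U_z) = (N_1 + zN_2, zN_3)$ is a $\Gamma$-contraction for each $z$ on the circle; since $U_z = zN_3$ is unitary, the routine fact recorded right after Theorem \ref{oldtheorem} — a $\Gamma$-contraction with unitary second component is a $\Gamma$-unitary — finishes it, and the family commutes because $N_1, N_2, N_3$ commute. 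Finally $(5) \Rightarrow (1)$: if each $(R_z, U_z)$ is a $\Gamma$-unitary then $R_z, U_z$ are normal; taking $z = 1$ and $z = i$ and solving the linear system recovers $N_1, N_2, N_3$ as linear combinations, and a short argument (e.g. using that $N_3 = \frac{1}{2\pi i}\oint \overline{z}\, U_z \,\frac{dz}{z}$-type averaging, or simply that $U_1 = N_3$ and $U_i = iN_3$ force $N_3$ normal, then $R_1 = N_1 + N_2$, $R_i = N_1 + iN_2$ normal force $N_1, N_2$ normal since they are sums/differences of commuting normals) shows $\Nbar$ is a commuting normal triple; then for the spectrum, by Lemma \ref{neatbdry} a scalar triple is in $bE$ iff $(x_1 + zx_2, zx_3) \in b\Gamma$ for all $|z|=1$, and since $\sigma(R_z, U_z) \subseteq b\Gamma$ for each such $z$, the spectral mapping theorem for the commuting normal triple $\Nbar$ places $\sigma(\Nbar)$ inside $bE$.

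The main obstacle I anticipate is the construction in $(2) \Rightarrow (3)$: getting an honest $2\times2$ \emph{unitary} operator matrix whose four entries all mutually commute and are normal, with the lower-right entry equal to $N_2$ and the determinant-entry equal to $N_3$. The commuting-normal requirement is delicate because the obvious unitary completion $\bigl(\begin{smallmatrix} N_2^* N_3 & D_{N_2}\\ -N_3 D_{N_2} & \cdot\end{smallmatrix}\bigr)$-style formulas need the off-diagonal filler to commute with $N_2$, and one must exploit that $D_{N_2}$ commutes with $N_2, N_2^*, N_3$ (which holds since $N_2$ is normal and commutes with $N_3$) to push this through; I would double-check the entry in position $(2,2)$ and the unitarity relation $[U_{ij}]^*[U_{ij}] = I$ carefully, as that is where the defect-operator algebra is really used. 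A secondary point to be careful about is making sure Theorem \ref{chain} and Lemma \ref{neatbdry}/Theorem \ref{bE} are invoked with exactly their stated hypotheses (commuting contractions, polynomial convexity of $\Ebar$ for the spectral-mapping step), but those are routine.
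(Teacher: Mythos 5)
Your proposal follows the same cycle $(1)\Rightarrow(2)\Rightarrow(3)\Rightarrow(4)\Rightarrow(5)\Rightarrow(1)$ as the paper, with essentially the same ingredients at each step: the joint functional calculus on $\sigma(\Nbar)\subseteq bE$ for $(1)\Rightarrow(2)$, a defect-operator completion for $(2)\Rightarrow(3)$ (the paper's choice is $U=\bigl(\begin{smallmatrix} N_2^*N_3 & -D_{N_2}\\ N_3D_{N_2} & N_2\end{smallmatrix}\bigr)$, which is your construction with the bookkeeping done), the passage from scalar unitary matrices in the joint spectrum of $\{U_{ij}\}$ to the spectral-radius estimate for $(3)\Rightarrow(4)$, Theorem \ref{chain} plus the ``$\Gamma$-contraction with unitary second component is a $\Gamma$-unitary'' fact for $(4)\Rightarrow(5)$, and Lemma \ref{neatbdry} with the spectral mapping theorem for $(5)\Rightarrow(1)$ (the paper uses $z=\pm1$ where you use $z=1,i$; both work, since sums of commuting normal operators are normal by Fuglede).

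There is, however, one genuine gap, and it sits exactly at the step you flagged as the main obstacle. In $(2)\Rightarrow(3)$ the hypotheses give only that $N_2$ is a \emph{contraction}; you then assert parenthetically that $D_{N_2}$ commutes with $N_2,N_2^*,N_3$ ``since $N_2$ is normal,'' but normality of $N_2$ is not among the hypotheses and must be derived --- without it, neither the mutual commutativity of the four entries nor the unitarity relation $U^*U=I$ (whose $(1,1)$ entry is $N_3^*N_2N_2^*N_3+D_{N_2}^2$, which equals $I$ only if $N_2N_2^*=N_2^*N_2$) goes through. The missing argument is short but essential: from $N_1=N_2^*N_3$ and $N_1N_2=N_2N_1$ one gets $N_2^*N_3N_2=N_2N_2^*N_3$, i.e.\ $N_2^*N_2N_3=N_2N_2^*N_3$, and multiplying on the right by $N_3^*$ (using that $N_3$ is unitary) yields $N_2^*N_2=N_2N_2^*$. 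Once this is inserted, your construction closes and the rest of the proof is sound.
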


\begin{proof}

$(1) \Rightarrow (2)$: By definition of a tetrablock unitary,
$N_1, N_2$ and $N_3$ are commuting normal operators and their
Taylor joint spectrum is contained in $bE$. By spectral mapping
theorem, $\sigma (N_3) = P_3 \sigma (\Nbar)$ where $P_3$ is the
projection onto the third co-ordinate. Since $\sigma(\Nbar)$ is
contained in $bE$, we have $| \lambda | = 1$ for all $\lambda \in
\sigma(N_3)$. So $N_3$ is a normal operator with its spectrum
contained in the unit circle. So it is a unitary.

Consider the $C^*$-algebra $\mathcal C$ generated by the commuting
normal operators $N_1, N_2$ and $N_3$. This commutative
$C^*$-algebra is isometrically isomorphic, by the Gelfand map, to
$C(\sigma(\Nbar))$. The Gelfand map takes $N_i$ to the co-ordinate
function $x_i$ for $i=1,2,3$. The co-ordinate functions satisfy
$x_1 = \overline{x}_2 x_3$ on the whole of $bE$ and hence on
$\sigma(\Nbar)$ which is contained in $bE$. Thus $N_1 = N_2^*
N_3$.

\vspace{3mm}

$(2) \Rightarrow (3)$: We first note that $N_2$ is normal.
Indeed, using the fact that $N_1 = N_2^* N_3$,
we get that $N_1 N_2 = N_2^* N_3 N_2 = N_2^* N_2 N_3$. On the other hand,
$N_2 N_1 = N_2 N_2^* N_3$. Since $N_1$ and $N_2$ commute, we have
$N_2^* N_2 N_3 = N_2 N_2^* N_3$. Multiplying both sides on the right by
$N_3^*$, we get $N_2$ to be normal. Now just take
$$ U = \dispmatrix N_2^* N_3 & -D_{N_2} \\ N_3 D_{N_2} & N_2 \\.$$

\vspace{3mm}

$(3) \Rightarrow (4)$: We shall verify that $\Nbar$ is a
tetrablock contraction by using Lemma \ref{simpler}. First note that,
because $U$ is a unitary, we have
$$ \dispmatrix I & 0 \\ 0 & I \\ = U^* U = \dispmatrix U_{11}^* &
U_{21}^* \\ U_{12}^* & U_{22}^* \\ \dispmatrix U_{11} & U_{12} \\ U_{21} & U_{22} \\
= \dispmatrix U_{11}^* U_{11} + U_{21}^* U_{21} & U_{11}^* U_{12} + U_{21}^* U_{22} \\
U_{12}^* U_{11} + U_{22}^* U_{21} & U_{12}^* U_{12} + U_{22}^*
U_{22} \\ . $$ So
\begin{equation} \label{U1} U_{11}^* U_{11} + U_{21}^*
U_{21} = I = U_{12}^* U_{12} + U_{22}^* U_{22} \end{equation} and
\begin{equation} \label{U2} U_{12}^* U_{11} + U_{22}^* U_{21} = 0. \end{equation} Since
$U_{11}, U_{12}, U_{21}$ and $U_{22}$ are commuting normal
operators, given any $(z_{11}, z_{12}. z_{21}, z_{22}) \in \sigma
(U_{11}, U_{12}, U_{21}, U_{22})$, we have
$$(z_{11}, z_{12}.
z_{21}, z_{22}, \zoverbar_{11}, \zoverbar_{12}, \zoverbar_{21},
\zoverbar_{22}) \in \sigma (U_{11}, U_{12}, U_{21}, U_{22},
U_{11}^*, U_{12}^*, U_{21}^*, U_{22}^*).$$ Thus by the relations (\ref{U1}) and (\ref{U2}), we have
$$ | z_{11} |^2 + | z_{21} |^2 = 1 = | z_{12} |^2 + | z_{22} |^2
\mbox{ and } \zoverbar_{12} z_{11} + \zoverbar_{22} z_{21} = 0. $$
Thus the scalar matrix $Z = \textmatrix z_{11} & z_{12} \\ z_{21}
& z_{22} \\ $ is a unitary. Let $p$ be a polynomial in three variables. Then
\begin{eqnarray*}
& & \| p(N_1, N_2, N_3) \| \\
& = & r (p(N_1, N_2, N_3)) [\mbox{by normality}] \\
& = & \sup \{ | p(\pi (Z)) | : Z = \left(
                                     \begin{array}{cc}
                                       z_{11} & z_{12} \\
                                       z_{21} & z_{22} \\
                                     \end{array}
                                   \right)
 \mbox{ with } (z_{11}, z_{12}. z_{21}, z_{22}) \in \sigma
(U_{11}, U_{12}, U_{21}, U_{22}) \}\\
& \le & \sup \{ | p(\lambda_1, \lambda_2, \lambda_3) | :
(\lambda_1, \lambda_2, \lambda_3) \in bE \} [\mbox{by the discussion above}]\\
& \le & \sup \{ | p(\lambda_1, \lambda_2, \lambda_3) | :
(\lambda_1, \lambda_2, \lambda_3) \in \Ebar \} = \| p \|_{\infty , E} \end{eqnarray*}
proving that $\Nbar$ is a tetrablock contraction.

\vspace{3mm}

$(4) \Rightarrow (5)$: From Theorem \ref{chain}, we know that $(R_z, U_z)$
is a $\Gamma$-contraction for every $z$ on the unit circle.
Moreover, $U_z$ is a unitary. A $\Gamma$-contraction whose second
component is a unitary has to be a $\Gamma$-unitary, see part (4)
of Theorem 2.5 of \cite{bpsr}. The commutativity is clear.

\vspace{3mm}

$(5) \Rightarrow (1)$: First note that $N_3$ is a unitary by putting $z=1$.
Since $R_1$ and $R_{-1}$ are commuting normal operators, $N_1 = (R_1 + R_{-1} )/2$
and $N_2 = (R_1 - R_{-1} )/2$ are commuting normal operators.
It remains to see that the joint spectrum $\sigma(\Nbar)$ is contained in $bE$.

The proof of that will depend on the observation that
$(x_1,x_2,x_3) \in bE$ if and only if for every $z$ on the unit
circle, $(s_z, p_z) \in b\Gamma$ where $s_z = x_1 + z x_2$ and
$p_z = zx_3$. Let $(x_1,x_2,x_3)$ be a point in the Taylor joint
spectrum $\sigma(\Nbar)$ of $\Nbar$. Let $z$ be from the unit
circle. Then the Taylor joint spectrum of $(R_z, U_z)$ is the set
$\{ (s_z, p_z) : s_z = x_1 + z x_2$ and $p_z = zx_3\}$ which is
contained in $b\Gamma$ because $(R_z, U_z)$ is a $\Gamma$-unitary.
Thus any point $(x_1,x_2,x_3)$ in $\sigma(\Nbar)$ has the property
that $(x_1 + z x_2 , zx_3)$ is in $b\Gamma$ for every $z$ on the
unit circle. By Lemma \ref{neatbdry} above, $(x_1,x_2,x_3)$ then
has to be in $bE$ and that completes the proof.
\end{proof}

The class of tetrablock contractions that are natural candidates for dilation
are the tetrablock unitaries for reasons that have been amply described.
However, we can simplify our lives by enlarging the class to include the following.

\begin{defn}

A tetrablock isometry is the restriction of a tetrablock unitary to a joint invariant subspace.
\end{defn}

This is also expressed by saying that a tetrablock isometry is a
triple of commuting bounded operators which has a simultaneous
$extension$ to a tetrablock unitary. Thus a tetrablock isometry
$\Vbar = (V_1, V_2, V_3)$ always consists of commuting subnormal
operators. Moreover, $V_3$ has to be an isometry. Call $\Vbar$ a
$pure$ tetrablock isometry if $V_3$ is a pure isometry, i.e., a
shift of some multiplicity. When we dilate a tetrablock
contraction, it will be enough to dilate only to a tetrablock
isometry because extension of a dilation is a dilation again as
we shall see when we define dilation in the next section.
It is for this reason that we need to understand the structure of
tetrablock isometries completely. The rest of this section does that,

\begin{thm}[Wold decomposition for a tetrablock isometry]
\label{Wold}
Let $\Vbar = (V_1, V_2, V_3)$ be a tetrablock isometry on a
Hilbert space $\mathcal H$. Then there is a decomposition of
$\mathcal H$ into a direct sum $\mathcal H = \mathcal H_1 \oplus
\mathcal H_2$ satisfying the following two conditions.

\begin{enumerate}

\item The subspaces $\mathcal H_1$ and $\mathcal H_2$ are
    reducing subspaces for each of the $V_i$.

    \item If $N_i = V_i |_{\mathcal H_1}$ and $W_i = V_i
        |_{\mathcal H_2}$, then the triple $\Nbar$ is a
        tetrablock unitary and the triple $\Wbar$ is a pure
        tetrablock isometry.
        \end{enumerate}

        \end{thm}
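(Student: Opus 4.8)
The plan is to mimic the classical Sz.-Nagy--Wold decomposition for a single isometry, but carried out with respect to $V_3$ alone, and then to check that this decomposition is automatically reducing for $V_1$ and $V_2$ as well. Since $\Vbar$ is a tetrablock isometry, $V_3$ is an isometry, so set
\[
\mathcal H_1 = \bigcap_{n \ge 0} V_3^n \mathcal H \quad \text{and} \quad \mathcal H_2 = \bigoplus_{n \ge 0} V_3^n (\ker V_3^*),
\]
the unitary part and the shift part of $V_3$ respectively. Then $\mathcal H = \mathcal H_1 \oplus \mathcal H_2$, both summands reduce $V_3$, $V_3|_{\mathcal H_1}$ is unitary and $V_3|_{\mathcal H_2}$ is a pure isometry (a shift). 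This is entirely standard.

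The first real step is to show $\mathcal H_1$ is reducing for $V_1$ and $V_2$. Because $\Vbar$ is a tetrablock isometry it extends to a tetrablock unitary $\underline U = (U_1, U_2, U_3)$ on some $\mathcal K \supseteq \mathcal H$, with $U_3$ unitary, $\mathcal H$ invariant for each $U_i$, and $V_i = U_i|_{\mathcal H}$. Since the $U_i$ commute and $U_3$ is invertible, $U_1, U_2$ commute with $U_3^{-1} = U_3^*$ too, hence with the orthogonal projection onto any $U_3$-reducing subspace; in particular $V_1 \mathcal H_1 \subseteq \mathcal H_1$ and, applying the same reasoning to $V_i^* = U_i^*|_{\mathcal H}$ — here I use that the adjoint of a tetrablock contraction is again one, and more directly that $U_i^*$ commutes with $U_3$ — one also gets $V_1^* \mathcal H_1 \subseteq \mathcal H_1$, so $\mathcal H_1$ reduces $V_1$, and similarly $V_2$. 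Consequently $\mathcal H_2 = \mathcal H_1^\perp$ also reduces all three $V_i$. This establishes condition (1). A slightly cleaner route, avoiding the extension, is to observe that $\mathcal H_1 = \overline{\mathrm{Ran}}\,D_{V_3}^\perp$-type arguments show $D_{V_3}|_{\mathcal H_1} = 0$; then Corollary \ref{twoneweqns} applied to the tetrablock isometry $\Vbar$ (its defect operator is $D_{V_3}$) gives $D_{V_3} V_1 = F_1 D_{V_3} + F_2^* D_{V_3} V_3$ and the companion equation, which forces $V_1$ and $V_2$ to leave $\ker D_{V_3}$, and hence $\mathcal H_1$, invariant; reducing-ness follows since everything in sight commutes with $V_3$.

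With the decomposition in hand, condition (2) is a matter of identifying the two pieces. For $\Wbar$: the restriction of a tetrablock isometry to a reducing (indeed any invariant) subspace is again a tetrablock isometry, as remarked in the excerpt for tetrablock contractions, so $\Wbar$ is a tetrablock isometry, and $W_3 = V_3|_{\mathcal H_2}$ is by construction a pure isometry; hence $\Wbar$ is a pure tetrablock isometry by definition. For $\Nbar$: $N_3 = V_3|_{\mathcal H_1}$ is unitary, and $\Nbar$ is a tetrablock isometry (restriction to a reducing subspace), hence in particular a tetrablock contraction whose third component is a unitary; by the equivalence (4)~$\Leftrightarrow$~(1) of Theorem \ref{tu}, $\Nbar$ is a tetrablock unitary. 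That completes the proof.

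The main obstacle I anticipate is establishing that $\mathcal H_1$ and $\mathcal H_2$ reduce $V_1$ and $V_2$ — the core of condition (1). The subtlety is that $V_1, V_2$ need not commute with $V_3^*$ a priori (they are only subnormal, and only $V_3$ is an isometry), so one cannot directly say they commute with the projection onto $\mathcal H_1$. Passing to the tetrablock-unitary extension $\underline U$, where $U_3$ is invertible and all three $U_i$ commute, is the way to recover the needed commutation with $U_3^*$; alternatively the fundamental-equation identities of Corollary \ref{twoneweqns} do the job intrinsically. Everything after that — identifying the unitary and pure pieces via Theorem \ref{tu} and the definition of a pure tetrablock isometry — is routine.
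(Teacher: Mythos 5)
Your overall strategy is the paper's: take the Wold decomposition of $V_3$, show the two summands reduce $V_1$ and $V_2$, then identify the pieces (tetrablock unitary via Theorem \ref{tu}, pure tetrablock isometry by definition). The identification step at the end is fine. The gap is exactly at the step you yourself flag as the core, and neither of your two routes closes it. In the first route, the identity $V_i^* = U_i^*|_{\mathcal H}$ is false: $\mathcal H$ is invariant, not reducing, for $U_i$, so $V_i^*$ is only the compression $P_{\mathcal H}U_i^*|_{\mathcal H}$, and invariance statements for $U_i^*$ do not transport down to $V_i^*$. Moreover, commuting with $U_3$ and $U_3^*$ yields commutation with the von Neumann algebra generated by $U_3$, whereas the projection onto a $U_3$-reducing subspace lies in the commutant of $U_3$, which is in general much larger; the claim that $U_1$ commutes with the projection onto \emph{any} $U_3$-reducing subspace fails already for $U_3 = I$. (And $\mathcal H_1 = \bigcap_n V_3^n\mathcal H$ is defined inside $\mathcal H$ via $V_3$; it is not exhibited as a $U_3$-reducing subspace of $\mathcal K$ in the first place.) The second route is vacuous: $V_3$ is an isometry, so $D_{V_3} = (I - V_3^*V_3)^{1/2} = 0$ and $\ker D_{V_3} = \mathcal H$; the equations of Corollary \ref{twoneweqns} for the triple $(V_1,V_2,V_3)$ read $0 = 0$ and carry no information about $\mathcal H_1$.

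What actually does the work, and is what the paper's proof runs on, is a block-matrix intertwining argument. Write $V_2 = \bigl(\begin{smallmatrix}A_{11}&A_{12}\\A_{21}&A_{22}\end{smallmatrix}\bigr)$ with respect to $\mathcal H_1\oplus\mathcal H_2$. Commutativity with $V_3 = N_3\oplus W_3$ forces $A_{21}N_3 = W_3A_{21}$, hence $A_{21} = W_3^{\,n}A_{21}N_3^{-n}$ and $\mathrm{Ran}\,A_{21}\subseteq\bigcap_n W_3^{\,n}\mathcal H_2 = \{0\}$; this is the honest version of ``$\mathcal H_1$ is invariant under anything commuting with $V_3$,'' and it is the easy half (your direct observation that $V_1\mathcal H_1\subseteq\mathcal H_1$ gives the same). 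The genuinely delicate corner is $A_{12}$: an operator intertwining a shift into a unitary need \emph{not} vanish (the inclusion $H^2\hookrightarrow L^2$ intertwines the two copies of $M_z$), so commutativity with $V_3$ alone does not make $V_2$ block diagonal. One must bring in the tetrablock structure, for instance the relation $V_1 = V_2^*V_3$ from Theorem \ref{ti}: the lower-left entry of $V_2^*V_3$ is $A_{12}^*N_3$, and since $V_1$ also commutes with $V_3$ its lower-left entry vanishes by the same range argument, so $A_{12}^*N_3 = 0$ and $A_{12} = 0$ because $N_3$ is invertible. That is the missing idea. Once $V_1$ and $V_2$ are block diagonal, your identification of $\Nbar$ as a tetrablock unitary and of $\Wbar$ as a pure tetrablock isometry goes through as written.
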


\begin{proof}

Let $V_3 = N_3 \oplus W_3$ be the Wold decomposition of the
isometry $V_3$ into its unitary part $N_3$ and the shift part
$W_3$. Suppose $\mathcal H = \mathcal H_1 \oplus \mathcal H_2$ be
the corresponding decomposition of the whole space $\mathcal H$.
Thus $V_3$ has the block matrix decomposition
$$ V_3 = \left(
           \begin{array}{cc}
             N_3 & 0 \\
             0 & W_3 \\
           \end{array}
         \right) .
$$
If we now write $V_2$ according to this decomposition of the
space, then let its block matrix form be
$$ V_2 = \left(
           \begin{array}{cc}
             A_{11} & A_{12} \\
             A_{21} & A_{22} \\
           \end{array}
         \right) .
$$
By commutativity of $V_2$ with $V_3$, the off diagonal entries
$A_{12}$ and $A_{21}$ end up commuting with the unitary $N_3$ and
the shift $W_3$. It is well known that no non-zero operator can do
that because $(W_3^*)^n$ converges to $0$ strongly as $n$ tends to
$\infty$. Thus $V_2$ is block diagonal too, say $V_2 = N_2 \oplus
W_2$ where $N_2$ and $W_2$ are contractions. Note that $V_1 =
V_2^* V_3$ because it inherits this property from its normal
extension. Indeed, by definition of a tetrablock isometry, there
is a Hilbert space $\mathcal K$ containing $\mathcal H$ and a
tetrablock unitary $(M_1, M_2, M_3)$ on $\mathcal K$ such that
$M_i |_{\mathcal H} = V_i$ for $i=1,2,3$. Thus for $h_1$ and $h_2$
in $\mathcal H$, we have $$ \langle V_1 h_1 , h_2 \rangle =
\langle M_1 h_1 , h_2 \rangle = \langle M_2^* M_3 h_1 , h_2
\rangle = \langle M_3 h_1 , M_2 h_2 \rangle = \langle V_3 h_1 ,
V_2 h_2 \rangle = \langle V_2^* V_3 h_1 ,  h_2 \rangle .$$
Consequently, $V_1 = N_1 \oplus W_1$ where $N_1 = N_2^* N_3$ and
$W_1 = W_2^* W_3$. By part (2) of Theorem \ref{tunitary}, we have
that $(N_1, N_2, N_3)$ is a tetrablock unitary. Thus our given
tuple $\Vbar$ has now been written as the direct sum of a
tetrablock unitary $\Nbar$ and a tuple $\Wbar$ whose third
component $W_3$ is a shift. That completes the proof.
\end{proof}

However, given a triple, how does one decide whether it is a
tetrablock isometry or not. The following result gives necessary
and sufficient criteria. The fourth part will be very handy when we construct
the dilation.

\begin{thm} \label{ti}

Let $\Vbar = (V_1, V_2, V_3)$ be a commuting triple of bounded
operators. Then the following are equivalent.

\begin{enumerate}

\item $\Vbar$ is a tetrablock isometry.

\item $\Vbar$ is a tetrablock contraction and $V_3$ is an
    isometry.

\item $V_3$ is an isometry, $V_2$ is a contraction and $V_1$
    is the same as $V_2^* V_3$.

    \item $V_3$ is an isometry, $r(V_1) \le 1$, $r(V_2) \le 1$
        and $V_1$ is the same as $V_2^* V_3$ where $r$ stands
        for spectral radius.

\end{enumerate}

\end{thm}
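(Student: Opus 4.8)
The plan is to establish the cycle of implications $(1)\Rightarrow(2)\Rightarrow(3)\Rightarrow(4)\Rightarrow(1)$, relying heavily on the already-developed dictionary between tetrablock objects and $\Gamma$-objects via the substitution $R_z = V_1 + zV_2$, $U_z = zV_3$.

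\medskip

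First I would dispose of the easy implications. For $(1)\Rightarrow(2)$: a tetrablock isometry extends to a tetrablock unitary $\Nbar$, so applying Lemma \ref{simpler} to the restriction (as noted in the paragraph after Lemma \ref{simpler}) shows $\Vbar$ is a tetrablock contraction, and since the unitary extension has unitary third component (part (2) of Theorem \ref{tu}), $V_3$ is the restriction of a unitary to an invariant subspace, hence an isometry. For $(2)\Rightarrow(3)$: since $\Vbar$ is a tetrablock contraction with $V_3$ an isometry, apply Theorem \ref{chain}: $(R_z,U_z)=(V_1+zV_2,zV_3)$ is a $\Gamma$-contraction for $|z|=1$, and $U_z$ is a unitary, hence $(R_z,U_z)$ is a $\Gamma$-unitary (the standard fact, part (4) of Theorem 2.5 of \cite{bpsr}, quoted in the proof of Theorem \ref{tu}). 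Then $R_z = R_z^* U_z$ on the distinguished boundary, i.e.\ $V_1 + zV_2 = (V_1+zV_2)^* zV_3 = z(V_1^* V_3) + (V_2^* V_3) + z^2(\text{nothing})$; carefully matching coefficients over all $|z|=1$ (integrating against $z$ and $1$) gives $V_1 = V_2^* V_3$ and $V_2 = V_1^* V_3$; the latter combined with the former is automatic. Also $\|V_2\| \le 1$ because $(R_z,U_z)$ being a $\Gamma$-unitary forces the right kind of contractivity, or more directly because $V_2 = V_1^* V_3$ with $V_3$ an isometry and $V_1$ a contraction (which one gets from $V_1 = V_2^*V_3$ and $\|V_2\|\le 1$, arguing in a small loop, or simply from the tetrablock-contraction hypothesis which already gives $\|V_2\|\le 1$). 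The implication $(3)\Rightarrow(4)$ is immediate since $r(V_i) \le \|V_i\|$ and $V_1 = V_2^* V_3$ gives $\|V_1\| \le \|V_2\| \le 1$.

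\medskip

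The substantive step is $(4)\Rightarrow(1)$, building a tetrablock-unitary extension from the hypotheses $V_3$ isometry, $r(V_1),r(V_2)\le 1$, $V_1 = V_2^*V_3$. First I would show $V_2$ is a contraction: since $V_2$ commutes with $V_3$ and $V_3$ is an isometry, and using $V_1 = V_2^* V_3$, one derives that $V_2$ is subnormal or at least that its defect is controlled; more promisingly, form the pair $(R_z,U_z)$ and verify directly that it is a $\Gamma$-isometry. For this I would check that $(R_z,U_z)$ satisfies the fundamental-equation/positivity criterion of Theorem \ref{oldtheorem}, or use that $U_z = zV_3$ is an isometry and that the numerical-radius bound on the fundamental operator follows from $r(R_z)\le 2$ (which comes from $r(V_1),r(V_2)\le 1$) together with $R_z = R_z^* U_z$; then invoke the standard fact that a $\Gamma$-contraction with isometric second component is a $\Gamma$-isometry. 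Granting $(R_z,U_z)$ is a $\Gamma$-isometry for each $|z|=1$, take the minimal $\Gamma$-unitary extension and reassemble: alternatively, dilate $V_3$ by its minimal unitary (Wold) extension $\widetilde{V_3}$ on $\mathcal K \supseteq \mathcal H$, and define $\widetilde{V_2}$ on $\mathcal K$ as the (unique, by a commutant-lifting / strong-limit argument using $(\widetilde{V_3}^{*})^n \to 0$ on the shift part and $V_2 = V_1^*V_3 = (V_2^*V_3)^*V_3$) normal operator commuting with $\widetilde{V_3}$ and restricting to $V_2$, set $\widetilde{V_1} = \widetilde{V_2}^* \widetilde{V_3}$, and verify via part (2) of Theorem \ref{tu} that $(\widetilde{V_1},\widetilde{V_2},\widetilde{V_3})$ is a tetrablock unitary extending $\Vbar$.

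\medskip

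The main obstacle I expect is precisely the construction and well-definedness of the normal extension $\widetilde{V_2}$ in the $(4)\Rightarrow(1)$ step --- in particular showing that the hypothesis $r(V_2)\le 1$ (rather than $\|V_2\|\le 1$) still forces $V_2$ to be a contraction, so that the extension is genuinely a tetrablock unitary with $N_2$ a contraction. I would handle this by exploiting $V_1 = V_2^*V_3$ with $V_3$ isometric: then $V_2^* V_2 = V_2^* V_3 V_3^* V_2 + V_2^*(I - V_3V_3^*)V_2 = V_1 V_1^* + V_2^*(I-V_3V_3^*)V_2$; iterating the relation and using that $V_1$ has spectral radius $\le 1$ together with a von Neumann-type estimate along the powers of $V_3$ should pin down $\|V_2\|\le 1$. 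Once $V_2$ is known to be a contraction, everything reduces to the already-proven Wold decomposition (Theorem \ref{Wold}) and the tetrablock-unitary characterization (Theorem \ref{tu}), and the circle of implications closes.
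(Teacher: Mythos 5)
Your outline of the easy implications is essentially sound, but the load-bearing step is exactly the one you flag as "the main obstacle," and your proposed fix does not work. The issue is passing from $r(V_2)\le 1$ to $\|V_2\|\le 1$ under the hypotheses of (4). Your identity $V_2^*V_2 = V_1V_1^* + V_2^*(I-V_3V_3^*)V_2$ (and its companion $V_2V_2^* = V_1^*V_3V_3^*V_1 \le V_1^*V_1$) only ever bounds norms by norms: iterating it yields $\|V_1\|=\|V_2\|$ and never lets the spectral-radius hypothesis enter, since $\|V_1V_1^*\|=\|V_1\|^2$ is not controlled by $r(V_1)$. The paper closes this gap with a specific device you are missing: from $V_1=V_2^*V_3$ one also gets $V_2=V_1^*V_3$, and then the off-diagonal block operator $B_1=\left(\begin{smallmatrix}0 & V_1\\ V_2 & 0\end{smallmatrix}\right)$ together with $B_2=V_3\oplus V_3$ satisfies $B_1=B_1^*B_2$ with $B_2$ an isometry, whence $B_1B_1^*=B_1^*B_2B_2^*B_1\le B_1^*B_1$, i.e.\ $B_1$ is hyponormal. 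Stampfli's theorem ($\|X\|=r(X)$ for hyponormal $X$) then converts the spectral-radius bound into a norm bound, after one computes $\|B_1\|=\max\{\|V_1\|,\|V_2\|\}$ and $r(B_1)=r(V_1V_2)^{1/2}\le r(V_1)^{1/2}r(V_2)^{1/2}\le 1$ (using commutativity of $V_1,V_2$ for the spectral inclusion $\sigma(V_1V_2)\subseteq\sigma(V_1)\sigma(V_2)$). Note also that the paper orders the cycle as $(1)\Rightarrow(2)\Rightarrow(3)\Rightarrow(1)$ and treats $(4)\Leftrightarrow(3)$ separately, so the Wold-type extension is only ever built from the hypothesis $\|V_2\|\le 1$; your routing $(4)\Rightarrow(1)$ forces you to prove the norm bound first anyway, so you cannot avoid this step.

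Two smaller points. In your $(2)\Rightarrow(3)$ you call $U_z=zV_3$ a unitary; under hypothesis (2) it is only an isometry, so $(R_z,U_z)$ is a $\Gamma$-isometry rather than a $\Gamma$-unitary. The conclusion you want still follows, and more directly than by matching coefficients: since $V_3$ is an isometry, $D_P=0$, so the first fundamental equation $V_1-V_2^*V_3=D_PF_1D_P$ from Theorem \ref{chain} gives $V_1=V_2^*V_3$ at once, and contractivity of $V_2$ is already part of being a tetrablock contraction. Second, in your extension step the object commuting with the minimal unitary extension of $V_3$ on the shift part is a Laurent (multiplication) operator on $L^2(E)$ extending a Toeplitz operator on $H^2(E)$ from an invariant (not reducing) subspace; describing it as "the normal operator restricting to $V_2$" obscures that the relation $\varphi_1(z)=\varphi_2(z)^*z$ on $\mathbb{T}$ is what makes the extended triple a tetrablock unitary via part (2) of Theorem \ref{tu}.
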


\begin{rem} \label{remti}

Note that post facto, (3) and (4) above imply that $V_2 = V_1^* V_3$ as well.

\end{rem}

\begin{proof}

We shall prove that $(1) \Rightarrow (2) \Rightarrow (3)
\Rightarrow (1)$ and then $(3) \Rightarrow (4) \Rightarrow (3)$.

$(1) \Rightarrow (2)$: Given a tetrablock isometry $\Vbar = (V_1,
V_2, V_3)$ on a Hilbert space $\mathcal H$, by its definition,
there is a Hilbert space $\mathcal K$ containing $\mathcal H$ and
a tetrablock unitary $\Nbar = (N_1, N_2, N_3)$ acting on $\mathcal
K$ for which $\mathcal H$ is an invariant subspace and $V_i = N_i
|_{\mathcal H}$ for $i=1,2,3$. It is then clear that $V_3$ is an
isometry because it is the restriction of the unitary $N_3$ to the
invariant subspace $\mathcal H$. It is also clear that $\Vbar$ is
a tetrablock contraction because it is the restriction of a
tetrablock contraction $\Nbar$ to an invariant subspace.

$(2) \Rightarrow (3)$: This part will require the solutions of the
fundamental equations. This is a major departure from the theory
of $\Gamma$-contractions because properties of $\Gamma$-isometries
were deduced before the fundamental equation for a
$\Gamma$-contraction was introduced. In the case of a tetrablock
isometry, it is simplest to use the fundamental equations since
existence and uniqueness of fundamental operators have already
been deduced in Section 2. Since $\Vbar$ is a tetrablock
contraction, the first fundamental equation has a solution. Since
$V_3$ is an isometry, the right hand side of that equation
vanishes and hence we have $V_1 = V_2^* V_3$ (of course, we also
have $V_2 = V_1^* V_3$ from the second fundamental equation, but
this is redundant). Contractivity of $V_2$ (or equivalently $V_1$)
holds because all components of a tetrablock contraction are
contractions.

$(3) \Rightarrow (1)$: Given a commuting triple $\Vbar = (V_1,
V_2, V_3)$ on a Hilbert space $\mathcal H$ consisting of an
isometry $V_3$, a contraction $V_2$ commuting with $V_3$ and $V_1
= V_2^* V_3$, we invoke the Wold decomposition of the isometry
$V_3$. This gives a decomposition of the space $\mathcal H$ into a
direct sum $\mathcal H_1 \oplus \mathcal H_2$ according to which
$V_3$ decomposes as the direct sum of a normal operator $N_3$ and
a shift $W_3$. We find that $V_2$, because it commutes with $V_3$,
has $\mathcal H_1$ and $\mathcal H_2$ as reducing subspaces.
Similarly, for $V_1$. This part of the argument has been detailed
above in the proof of the Wold decomposition of a tetrablock
isometry. Let $N_1, N_2, W_1$ and $W_2$ be as in that proof.
Moreover, the relations $N_1 = N_2^* N_3$ and $W_1 = W_2^* W_3$
follow because of the given relation $V_1 = V_2^* V_3$. Then the
commuting triple $\Nbar$ consists of a unitary $N_3$, a
contraction $N_2$ and $N_1 = N_2^* N_3$. Thus it is a tetrablock
unitary. Thus to show that $\Vbar$ is a tetrablock isometry, we
need to show that the tuple $\Wbar$ can be extended to a
tetrablock unitary, since the other part $\Nbar$ is already a
tetrablock unitary.

Realize $W_3$ as multiplication by the co-ordinate function $z$ on
a vector valued Hardy space $H^2(E)$ where the dimension of $E$ is
the multiplicity of the shift $W_3$. Since $W_1$ and $W_2$ commute
with this shift and with each other, there are commuting
$H^\infty(E)$ functions $\varphi_1$ and $\varphi_2$ such that $W_i
= T_{\varphi_i}$, the multiplication on $H^2(E)$ by $\varphi_i$
for $i=1,2$. Moreover, the $H^\infty$ norms of the operator valued
functions $\varphi_1$ and $\varphi_2$ are not greater than one since
$W_1$ and $W_2$ are contractions. Because $W_1={W_2}^* W_3$, or equivalently
$M_{\varphi_1}^E={M_{\varphi_2}^E}^*M_z^E$, we have
\begin{equation} \label{phis} \varphi_1(z)={\varphi_2}^*(z)z \mbox{ for all } z\in\mathbb{T}.\end{equation}
Consider on $L^2(E)$, the multiplication operators $U_{\varphi_1}^E, U_{\varphi_2}^E$
and $U_z^E$, multiplications by $\varphi_1(z)$, $\varphi_2(z)$ and $z$ respectively.
Obviously $U_z^E$ is a unitary operator on $L^2(E)$. Because of the relation
(\ref{phis}) that the functions $\varphi_1$ and $\varphi_2$ satisfy, we have
$U_{\varphi_1}^E = (U_{\varphi_2}^E)^* U_z^E$.
Altogether the triple $(U_{\varphi_1}^E, U_{\varphi_2}^E, U_z^E)$
makes a tetrablock unitary and it extends the triple $\Wbar$. So we are done.

Thus we have proved $(1) \Rightarrow (2) \Rightarrow (3)
\Rightarrow (1)$. For the remaining part, first note that $(3)$
implies $(4)$ trivially. For the converse, since we have $V_1 =
V_2^* V_3$, multiplying both sides from the left by $V_3^*$, we
get $V_3^* V_1 = V_3^* V_2^* V_3$ which by commutativity is the
same as $V_2^* V_3^* V_3$ which again is just $V_2^*$ because
$V_3$ is an isometry. Thus $V_2 = V_1^* V_3$ as well. Consider the
two operators
$$B_1 = \dispmatrix 0 & V_1 \\ V_2 & 0 \\ \mbox{ and }
B_2 = \dispmatrix V_3 & 0 \\ 0 & V_3 \\.$$ By the relations $V_1 =
V_2^* V_3$ and $V_2 = V_1^* V_3$, this pair satisfies $B_1 = B_1^*
B_2$. This immediately implies that $B_1$ is hyponormal. Indeed,
$B_1 B_1^* = B_1^* B_3 B_3^* B_1 \le B_1^* B_1$ and this is the
defining property of hyponormality. In fact, the pair $(B_1, B_2)$
is a $\Gamma$-contraction, but that is besides the point.

Now we use a remarkable theorem due to Stampfli.

\begin{thm}[Stampfli] \label{stampfli}If $X$ is a hyponormal operator, then
$\|X^n\|=\|X\|^n$ and so $\|X\|=r(X)$.
\end{thm}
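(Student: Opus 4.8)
The plan is to establish the multiplicativity $\|X^n\| = \|X\|^n$ by induction on $n$, resting on a single log-convexity inequality for the sequence $\left(\|X^n\|\right)_{n\ge 0}$, and then to deduce $\|X\| = r(X)$ from the spectral radius formula. Throughout I would use only the defining property of hyponormality, namely $\|X^*h\| \le \|Xh\|$ for every vector $h$ (equivalently $X^*X \ge XX^*$); the case $X = 0$ is trivial, so assume $X \ne 0$.

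First I would prove the auxiliary inequality
\[ \|X^n\|^2 \le \|X^{n+1}\| \, \|X^{n-1}\| \qquad (n \ge 1). \]
Fix a unit vector $h$. Writing $\|X^n h\|^2 = \langle X^n h, X^n h\rangle = \langle X^*(X^n h), X^{n-1} h\rangle$ and applying Cauchy--Schwarz gives $\|X^n h\|^2 \le \|X^*(X^n h)\| \, \|X^{n-1} h\|$. Hyponormality applied to the vector $X^n h$ yields $\|X^*(X^n h)\| \le \|X(X^n h)\| = \|X^{n+1} h\|$, whence $\|X^n h\|^2 \le \|X^{n+1} h\|\,\|X^{n-1} h\| \le \|X^{n+1}\|\,\|X^{n-1}\|$; taking the supremum over unit vectors $h$ proves the claim. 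In particular, the case $n = 1$ reads $\|X\|^2 \le \|X^2\|$, and since $\|X^2\| \le \|X\|^2$ always holds, we obtain $\|X^2\| = \|X\|^2$, which will serve as the base of the induction.

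Next I would run the induction. Assume $\|X^k\| = \|X\|^k$ for all $k \le n$, where $n \ge 2$ (the cases $k = 1, 2$ being settled). Applying the auxiliary inequality,
\[ \|X\|^{2n} = \|X^n\|^2 \le \|X^{n+1}\|\,\|X^{n-1}\| = \|X^{n+1}\|\,\|X\|^{n-1}, \]
so $\|X^{n+1}\| \ge \|X\|^{n+1}$; together with the trivial bound $\|X^{n+1}\| \le \|X\|\,\|X^n\| = \|X\|^{n+1}$ this gives $\|X^{n+1}\| = \|X\|^{n+1}$. Finally, from $\|X^n\|^{1/n} = \|X\|$ for every $n$ and the spectral radius formula $r(X) = \lim_{n\to\infty} \|X^n\|^{1/n}$ we conclude $r(X) = \|X\|$.

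There is no genuine obstacle once the Cauchy--Schwarz manoeuvre in the auxiliary inequality is spotted; the only point needing a little care is the bookkeeping of indices, which is why I would peel off $\|X^2\| = \|X\|^2$ as a separate base step, so that the three-term inequality is only ever invoked with all indices at least $1$.
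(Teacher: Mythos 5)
Your proof is correct: the Cauchy--Schwarz step $\|X^n h\|^2 \le \|X^*(X^nh)\|\,\|X^{n-1}h\| \le \|X^{n+1}h\|\,\|X^{n-1}h\|$ is exactly the right use of hyponormality, and the induction and the appeal to the spectral radius formula are sound. The paper does not prove this statement at all --- it simply cites Proposition 4.6 of Conway's book --- and your argument is essentially the standard one found there, so nothing further is needed.
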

For a proof of this theorem, see Proposition 4.6 of \cite{conway}.

We shall use it with $X = B_1$ to get that $\| B_1 \| = r(B_1)$.
The operator norm of $B_1$ is as follows.
$$ \| B_1 \|^2 = \| \dispmatrix 0 & V_1 \\ V_2 & 0 \\
\dispmatrix 0 & V_2^* \\  V_1^* & 0 \\ \| = \| \dispmatrix V_1
V_1^* & 0 \\ 0 & V_2V_2^* \\ \| = \max \{ \| V_1 \|^2 , \| V_2
\|^2 \}.$$ Thus $\| B_1 \| = \max \{ \| V_1 \| , \| V_2 \| \}.$
Now, for the spectral radius, we apply the spectral radius
formula. A straightforward computation using commutativity of
$V_1$ and $V_2$ shows that
$$ B_1^{2n} = \dispmatrix (V_1 V_2)^n & 0 \\ 0 & (V_1 V_2)^n \\
.$$ Consequently, $r(B_1) = \lim \| B_1^{2n} \|^{1/2n} = \lim \|
(V_1V_2)^n \|^{1/2n} = r(V_1V_2)^{1/2}$. Because $V_1$ and $V_2$
commute, the spectrum of $V_1V_2$ is contained in the set $\{
\lambda \mu : \lambda \in \sigma(V_1) \mbox{ and } \mu \in
\sigma(V_2) \}$. Thus, given that both $V_1$ and $V_2$ have
spectral radii not greater than one, the same is true for $V_1
V_2$. Consequently, $r(B_1) \le 1$. thus by Stampfli's result,
both $V_1$ and $V_2$ are contractions.
\end{proof}

We have a structure theorem for pure tetrablock isometries to go
with the result above.

\begin{thm}

Let $\Vbar = (V_1, V_2, V_3)$ be a triple of bounded operators
on a separable Hilbert space $\mathcal H$. Then $\Vbar$ is a pure
tetrablock isometry if and only if there is a separable Hilbert space $E$,
a unitary $ U : \mathcal H \rightarrow H^2(E)$ and two bounded operators
$\tau_1$ and $\tau_2$ on $E$ such that

\begin{enumerate}

\item the $H^\infty$ norm of the operator valued function $ \tau_1 + \tau_2 z$ is at most $1$,

\item $V_3 = U^* M_z^E U, V_2 = U^* M_{\varphi_2}^E U$ and $V_1 = U^* M_{\varphi_1}^E U$
where $\varphi_1(z) = \tau_1 + \tau_2^* z$ and $\varphi_2(z) = \tau_2 + \tau_1^* z$,

\item $\tau_1 \tau_2 = \tau_2 \tau_1$ and $[\tau_1 , \tau_1^* ] = [\tau_2 , \tau_2^*]$.

\end{enumerate}

\end{thm}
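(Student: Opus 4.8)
The plan is to establish the two implications using the characterization of tetrablock isometries in Theorem \ref{ti}(3), the Wold--von Neumann model of a pure isometry, and the fact that the commutant of the shift on a vector-valued Hardy space consists of the analytic Toeplitz operators.

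For the forward implication I would start from Theorem \ref{ti}: a pure tetrablock isometry $\Vbar=(V_1,V_2,V_3)$ has $V_3$ a pure isometry, $V_1,V_2$ contractions, and $V_1=V_2^*V_3$ (and, post facto, $V_2=V_1^*V_3$). Applying the Wold decomposition to the single pure isometry $V_3$ produces a separable Hilbert space $E=\ker V_3^*$ and a unitary $U\colon\mathcal H\to H^2(E)$ with $UV_3U^*=M_z^E$. Since $V_1$ and $V_2$ commute with $V_3$, the operators $UV_1U^*$ and $UV_2U^*$ commute with $M_z^E$, hence equal analytic Toeplitz operators $M_{\varphi_1}^E$, $M_{\varphi_2}^E$ with $\varphi_i\in H^\infty(\mathcal B(E))$ and $\|\varphi_i\|_{H^\infty}=\|V_i\|\le1$. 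Next I would transport $V_1=V_2^*V_3$ to $H^2(E)$ and compare Taylor coefficients of $M_{\varphi_1}^E=(M_{\varphi_2}^E)^*M_z^E$; this should force $\varphi_1$ and $\varphi_2$ to be affine and, with $\tau_1:=\varphi_1(0)$ and $\tau_2:=\varphi_2(0)$, to take the form $\varphi_1(z)=\tau_1+\tau_2^*z$, $\varphi_2(z)=\tau_2+\tau_1^*z$, which is (2); contractivity of $V_1$ (equivalently $V_2$) is (1). Finally I would rewrite $V_1V_2=V_2V_1$ as the functional identity $\varphi_1(z)\varphi_2(z)=\varphi_2(z)\varphi_1(z)$ and match the coefficients of $1$, $z$, $z^2$: the constant and $z^2$ coefficients give $\tau_1\tau_2=\tau_2\tau_1$, and the $z$-coefficient gives $\tau_1\tau_1^*+\tau_2^*\tau_2=\tau_2\tau_2^*+\tau_1^*\tau_1$, i.e.\ $[\tau_1,\tau_1^*]=[\tau_2,\tau_2^*]$, which is (3).

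For the converse, given $E$, $U$ and $\tau_1,\tau_2$ satisfying (1)--(3), I would set $\varphi_1,\varphi_2$ as in (2), $V_3=U^*M_z^EU$ and $V_i=U^*M_{\varphi_i}^EU$, and verify the hypotheses of Theorem \ref{ti}(3). Here $V_3$ is a pure isometry; a direct computation (using $\varphi_2(z)^*z=\varphi_1(z)$ on the unit circle and the affineness of $\varphi_2$, which makes $(M_{\varphi_2}^E)^*M_z^E$ again analytic Toeplitz with symbol $\varphi_1$) yields $V_1=V_2^*V_3$; condition (1) gives $\|V_2\|\le1$; analytic Toeplitz operators commute with $M_z^E$, so $V_1,V_2$ commute with $V_3$; and running the coefficient computation backwards shows that (3) is equivalent to $V_1V_2=V_2V_1$. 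Thus $\Vbar$ is a commuting triple with $V_3$ a pure isometry, $V_2$ a contraction and $V_1=V_2^*V_3$; Theorem \ref{ti} then makes it a tetrablock isometry, and purity of $V_3$ makes it a pure tetrablock isometry.

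The step I expect to be the main obstacle is the bookkeeping in the forward direction: first, the vector-valued fact that commuting with $M_z^E$ forces an operator to be analytic Toeplitz (a Beurling--Lax--Halmos type statement, which I would cite rather than reprove), and second, tracking exactly where the adjoints fall so that a single pair $\tau_1,\tau_2$ parametrizes both symbols simultaneously and so that commutativity of $V_1$ and $V_2$ collapses precisely to the two identities in (3). The remaining steps are routine assemblies of Theorem \ref{ti} with standard Hardy-space facts.
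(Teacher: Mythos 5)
Your proposal is correct and follows essentially the same route as the paper: the Wold model of the pure isometry $V_3$, the identification of $V_1$ and $V_2$ with analytic Toeplitz operators via the commutant of $M_z^E$, the use of $V_1=V_2^*V_3$ to force the affine form of the symbols, and part (3) of Theorem \ref{ti} for the converse. The only difference is that you spell out the coefficient matching in $\varphi_1\varphi_2=\varphi_2\varphi_1$ that yields condition (3), a computation the paper leaves implicit, and your bookkeeping there is accurate.
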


\begin{proof}

First suppose we are given a separable Hilbert space $E$,
a unitary $ U : \mathcal H \rightarrow H^2(E)$ and two bounded operators
$\tau_1$ and $\tau_2$ on $E$ such that

\begin{enumerate}

\item the $H^\infty$ norm of the operator valued function $ \tau_1 + \tau_2 z$ is at most $1$,

\item $V_3 = U^* M_z^E U, V_2 = U^* M_{\varphi_2}^E U$ and $V_1 = U^* M_{\varphi_1}^E U$
where $\varphi_1(z) = \tau_1 + \tau_2^* z$ and $\varphi_2(z) = \tau_2 + \tau_1^* z$,

\item $\tau_1 \tau_2 = \tau_2 \tau_1$ and $[\tau_1 , \tau_1^* ] = [\tau_2 , \tau_2^*]$.

\end{enumerate}

In that case, $V_3$ is an isometry, $V_2$ is a contraction and $V_1$ is the same $V_2^*V_3$.
Moreover, the condition (3) above implies that $\Vbar$ is a commuting triple.
Consequently, we can invoke part (3) of Theorem \ref{ti} to conclude that $\Vbar$ is a
tetrablock isometry. Moreover, the pureness of $V_3$ now implies that $\Vbar$ is a pure tetrablock isometry.

Conversely, let $\Vbar$ be a pure tetrablock isometry. The existence of $E$ is
due to the fact that $V_3$ is a pure isometry and hence is necessarily isomorphic
to multiplication by $z$ on $H^2(E)$ for some $E$.

By the commutativity of $V_1$ and $V_2$ with $V_3$, we have that
$$ V_1 =M_{\varphi_1}^{E} \mbox{ and } V_2 =M_{\varphi_2}^{E}$$
for some $\varphi_1$ and $\varphi_2$ in $H^{\infty}(E)$. Because $\Vbar$ is a tetrablock isometry, it satisfies
$V_1 = V_2^* V_3$. This, when translated in terms of the functions,
by using the power series expansion of the holomorphic functions $\varphi_1$ and $\varphi_2$,
gives us that the functions necessarily have to be of the form
$$ \varphi_1(z) = \tau_1 + \tau_2 z \mbox{ and } \varphi_2(z) = \tau_2^* + \tau_1^* z.$$
Their $H^\infty$ norms do not exceed $1$ because $V_1$ and $V_2$ are contractions.
Moreover, commutativity of $V_1$ and $V_2$ now gives the condition (3) above.
\end{proof}

After deciphering the structure of two special kinds of tetrablock contractions, we are ready to construct
the diltion.

\section{Dilation}

As we have noted already,  given a tetrablock contraction, the
dilation triple $(\Delta_1, \Delta_2, \Delta_3)$ needs to be a
tetrablock unitary. Note that a tetrablock contraction has a
tetrablock unitary dilation if and only if it has a tetrablock
isometric dilation. This is true because a tetrablock isometry is
nothing but the restriction of a tetrablock unitary to a joint
invariant subspace. In other words, a tetrablock isometry can, by
definition, be extended to a tetrablock unitary. It is elementary
that extension of a dilation is a dilation. Thus if a tetrablock
contraction has a tetrablock isometric dilation, then it also has
a tetrablock unitary dilation. We shall first construct a dilation
assuming that $(A, B, P)$ is a tetrablock contraction whose
fundamental operators $F_1$ and $F_2$ satisfy the conditions
$[F_1, F_2] = 0$ and $[F_1 , F_1^* ] = [F_2 , F_2^* ]$. This is
akin to Sch\"{a}ffer's construction of minimal isometric dilation
of a contraction.

\begin{thm}

Let $(A, B, P)$ be a tetrablock contraction on
$\mathcal H$ with fundamental operators $F_1$ and $F_2$ . Let $\mathcal D_P$ be the closure of the range of $D_P$.
Let $\mathcal K = \mathcal H \oplus \mathcal D_P \oplus \mathcal D_P
\oplus \cdots = \mathcal H \oplus l^2(\mathcal D_P) $. Consider the
operators $V_1, V_2$ and $V_3$ defined on $\mathcal{K}$ by
\begin{align*} &
V_1(h_0,h_1,h_2,\dots)=(Ah_0,F_2^* D_P h_0 + F_1 h_1 , F_2^*h_1 + F_1 h_2 , F_2^*h_2 + F_1 h_3,\dots)\\
& V_2(h_0,h_1,h_2,\dots)=(Bh_0 , F_1^* D_P h_0 + F_2 h_1 , F_1^*h_1 + F_2 h_2 , F_1^*h_2 + F_2 h_3,\dots)\\
& V_3(h_0,h_1,h_2,\dots)=(Ph_0, D_P h_0,h_1,h_2,\dots).
\end{align*}
Then \begin{enumerate}
\item $\Vbar = (V_1,V_2,V_3)$ is a minimal tetrablock isometric
    dilation of $(A, B, P)$ if $[F_1 , F_2] = 0$ and $[F_1 , F_1^* ] = [F_2 , F_2^* ]$.
\item If there is a tetrablock isometric dilation $\Wbar =
    (W_1,W_2,W_3)$ of $(A, B, P)$ such that $W_3$ is the minimal isometric dilation of $P$,
    then $\Wbar$ is unitarily equivalent to $\Vbar$. Moreover, $[F_1, F_2] = 0$ and $[F_1 , F_1^* ]
    = [F_2 , F_2^* ]$.
\end{enumerate}

\end{thm}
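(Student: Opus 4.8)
The plan is to verify directly that the three block operators $V_1,V_2,V_3$ satisfy all the requirements of a tetrablock isometric dilation, and then handle minimality and the uniqueness statement separately. For part (1), I would proceed in three stages: first show $\Vbar$ is a commuting triple; second show $V_3$ is an isometry, $V_2$ a contraction and $V_1 = V_2^* V_3$, so that part (3) of Theorem \ref{ti} applies to conclude $\Vbar$ is a tetrablock isometry; third verify the dilation property $P_{\mathcal H} V_1^{m_1} V_2^{m_2} V_3^{m_3}|_{\mathcal H} = A^{m_1}B^{m_2}P^{m_3}$.

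For the first stage, I would compute $V_1 V_2$, $V_2 V_1$, $V_1 V_3$, $V_3 V_1$, $V_2 V_3$, $V_3 V_2$ entrywise on a vector $(h_0,h_1,h_2,\dots)$. The top entry commutes because $A,B,P$ commute. For the lower entries, matching coefficients will produce exactly the conditions $[F_1,F_2]=0$ and $[F_1,F_1^*]=[F_2,F_2^*]$, together with the two identities from Corollary \ref{twoneweqns}, namely $D_P A = F_1 D_P + F_2^* D_P P$ and $D_P B = F_2 D_P + F_1^* D_P P$, which handle the terms linking $h_0$ to $h_1$. This is where the two hypotheses on the fundamental operators get used, and checking it is the most computation-heavy but conceptually routine step. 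For the second stage: $V_3$ is visibly an isometry (it is Sch\"{a}ffer's minimal isometric dilation of $P$, using $D_P^2 + P^*P = I$); $V_2$ is a contraction because its ``symbol'' is essentially the multiplication operator by $\varphi_2(z) = F_2 + F_1^* z$ whose $H^\infty$ norm is $\le 1$ by part (4) of Theorem \ref{chain} (the numerical radius bound on $F_1 + zF_2$ gives, via $\|F_1 + zF_2\| \le 2w(F_1+zF_2)$, or more directly via the structure, the needed contractivity — I would model $V_2$ on $l^2(\mathcal D_P)$ as a Toeplitz-type operator and read off its norm); and $V_1 = V_2^* V_3$ is a direct entrywise check using once more the identity $D_P A = F_1 D_P + F_2^* D_P P$. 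For the dilation property I would note $V_3$ is an isometric dilation of $P$ in the classical sense, and that the compression of $V_1$ (resp. $V_2$) to $\mathcal H$ is $A$ (resp. $B$); since $\mathcal H$ is co-invariant (its orthocomplement $l^2(\mathcal D_P)$ is invariant under each $V_i$, as one sees from the formulas, so $\mathcal H$ is invariant under each $V_i^*$), compressions of products equal products of compressions, giving (\ref{dilation}). Minimality follows because already $V_3^n h_0$ for $h_0 \in \mathcal H$ spans $\mathcal K$ (this is the minimality of the Sch\"{a}ffer dilation of $P$), and a fortiori the span over all monomials in $V_1,V_2,V_3$ is dense.

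For part (2), suppose $\Wbar = (W_1,W_2,W_3)$ is a tetrablock isometric dilation with $W_3$ the minimal isometric dilation of $P$. By uniqueness of the minimal isometric dilation of a contraction, $W_3$ is unitarily equivalent to $V_3$, so I may identify the space with $\mathcal K$ and assume $W_3 = V_3$. Since $\Wbar$ is a tetrablock isometry, Theorem \ref{ti}(3) gives $W_1 = W_2^* W_3$, so it suffices to pin down $W_2$. Because $W_2$ commutes with the pure-plus-unitary isometry $W_3 = V_3$ and dilates $B$ (so compresses to $B$ on $\mathcal H$), a matrix computation — writing $W_2$ in block form against $\mathcal K = \mathcal H \oplus l^2(\mathcal D_P)$ and using $W_2 V_3 = V_3 W_2$ together with $P_{\mathcal H} W_2|_{\mathcal H} = B$ and the fact that $\mathcal H^\perp$ is invariant under $W_3^*$ hence (being a dilation, with $\mathcal H$ co-invariant) reducing considerations force the off-diagonal and lower blocks — will show $W_2$ has exactly the banded form of $V_2$ with some operators $G_2$ (and $W_1$ with $G_1$) in the roles of $F_2, F_1$. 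Feeding this back, the relation $W_1 = W_2^* W_3$ forces $G_1 D_P = $ (the $F_1$-expression), and comparing with the fundamental equations shows $G_1 = F_1$, $G_2 = F_2$ by the uniqueness clause of Corollary \ref{twoneweqns}. Finally, since $\Wbar$ is genuinely a commuting triple, re-running the commutator computation from stage one of part (1) \emph{backwards} shows that $[F_1,F_2]=0$ and $[F_1,F_1^*]=[F_2,F_2^*]$ must hold. The main obstacle I anticipate is this last extraction in part (2): proving that an \emph{arbitrary} tetrablock-isometric dilation commuting with the fixed $V_3$ is forced into the Sch\"{a}ffer band form requires carefully exploiting the minimality/co-invariance of $\mathcal H$ inside $\mathcal K$ and the structure theorem for isometries commuting with $M_z \oplus (\text{unitary})$, rather than a one-line argument.
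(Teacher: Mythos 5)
There is a genuine gap at the point where you conclude that $\Vbar$ is a tetrablock isometry via part (3) of Theorem \ref{ti}: that route requires $V_2$ to be a contraction, and you justify this by saying the symbol $\varphi_2(z)=F_2+F_1^*z$ has $H^\infty$ norm at most $1$ ``by part (4) of Theorem \ref{chain}''. But part (4) of Theorem \ref{chain} only bounds the \emph{numerical radius} of $F_1+zF_2$ by $1$; the inequality $\|F_1+zF_2\|\le 2\,w(F_1+zF_2)$ that you yourself quote yields only $\|\varphi_2(z)\|\le 2$, so modelling $V_2$ as a Toeplitz-type operator and ``reading off its norm'' gives a bound of $2$, not $1$. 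This is exactly why the paper proves and then invokes part (4) of Theorem \ref{ti} (the spectral-radius criterion) rather than part (3): it writes $V_1$ as a lower-triangular $2\times 2$ block operator, uses the inclusion $\sigma(V_1)\subseteq\sigma(A)\cup\sigma(E_1)$ from \cite{hong}, and estimates $r(E_1)\le w(E_1)\le\sup_{|z|=1}w(F_1+zF_2^*)\le 1$, where the numerical radius hypothesis does suffice because $r\le w$. Contractivity of $V_1$ and $V_2$ then comes out only a posteriori (inside the proof of Theorem \ref{ti}, via Stampfli's theorem). As written, your second stage does not close; you should switch to the spectral-radius criterion.

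A smaller but real omission: in checking $V_1V_2=V_2V_1$, the entry coupling $h_0$ to the first $\mathcal D_P$ coordinate requires the identity $F_1^*D_PA+F_2F_2^*D_P=F_2^*D_PB+F_1F_1^*D_P$, which is neither one of the two equations of Corollary \ref{twoneweqns} nor one of the two hypotheses on $F_1,F_2$; the paper reduces it via the fundamental equations to $A^*A-B^*B=D_P(F_1F_1^*-F_2F_2^*)D_P$ and then uses Corollary \ref{TandF} together with $[F_1,F_1^*]=[F_2,F_2^*]$. Your outline of part (2) --- Sch\"{a}ffer form of $W_3$, triangularity of $W_1,W_2$ from commutation with $W_3$, identification of the symbols from $W_1=W_2^*W_3$, pinning down $F_1,F_2$ by the uniqueness in Corollary \ref{twoneweqns}, and extracting the two commutator identities from $[W_1,W_2]=0$ --- matches the paper's argument and is sound.
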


{\em Proof of (1)}:
It is evident from the definition that $V_3$ on $\mathcal{K}$ is
the minimal isometric dilation of $P$ in the Sch\"{a}ffer form.
Sch\"{a}ffer wrote down the unitary dilation in \cite{Schaeffer} and
we only have the isometry part of it here.

Obviously the adjoints of the three operators on $\mathcal{K}$ are
\begin{align*}
&
V_1^*(h_0,h_1,h_2,\dots)=(A^*h_0+{D_P} F_2h_1, F_1^*h_1 + F_2h_2, F_1^*h_2 + F_2h_3,\dots)\\
&
V_2^*(h_0,h_1,h_2,\dots)=(B^*h_0+{D_P} F_1h_1, F_2^*h_1 + F_1h_2, F_2^*h_2 + F_1h_3,\dots)\\
& V_3^*(h_0,h_1,h_2,\dots)=(P^*h_0+{D_P} h_1,h_2,h_3,\dots).
\end{align*}
The space $\mathcal{H}$ can be embedded inside $\mathcal{K}$ by
the map $h\mapsto (h,0,0,\dots)$. It  is clear that $\mathcal{H}$,
considered as a subspace of $\mathcal{K}$ is co-invariant under
$V_1, V_2$ and $V_3$. Moreover, $V_1^*|_{\mathcal{H}}=A^*, V_2^*|_{\mathcal{H}}=B^*$ and $V_3^*|_{\mathcal{H}}=P^*$.
This of course immediately implies (\ref{dilation}). The job now is to show that $\Vbar$ is a tetrablock isometry.

Since $V_3$ is an isometry, in order to show that $\Vbar$ is a tetrablock isometry, one has to justify the
following:
\begin{enumerate}
\item $\Vbar$ is a commuting triple,
\item $V_1 = V_2^* V_3$,
\item $r(V_1) \le 1$ and $r(V_2) \le 1$.
\end{enumerate}
If we can show these, then by part (4) of Theorem \ref{ti},
$\Vbar$ will be a tetrablock isometry.
\begin{align*}
& V_1V_3(h_0,h_1,h_2,\dots)\\
= \; & V_1(Ph_0, D_P h_0,h_1,h_2,\dots)\\
= \; & (APh_0,F_2^*D_P P h_0 + F_1 D_P h_0 , F_2^* D_P h_0 + F_1 h_1 , F_2^*h_1+ F_1 h_2, F_2^*h_2 + F_1 h_3,\dots).
\end{align*}
\begin{align*}
V_3V_1(h_0,h_1,h_2,\dots)&=V_3(Ah_0,F_2^* D_P h_0 + F_1 h_1, F_2^*h_1 + F_1 h_2, F_2^*h_2 + F_1 h_3,\dots)\\
&=(P A h_0, D_P A h_0, F_2^* D_P h_0 + F_1 h_1, F_2^*h_1 + F_1 h_2, F_2^*h_2 + F_1 h_3,\dots).
\end{align*}
Thus, to show that $V_1$ and $V_3$ commute, we only need to show
that $D_PA = F_2^*D_PP + F_1 D_P$. Similarly, for $V_2$ and $V_3$ to
commute, the criterion is that $D_PB = F_1^*D_PP + F_2 D_P$. The
proofs of these identities will use the formula (\ref{relation})
for the $\Gamma$-contraction $(A +zB , zP)$ and its
fundamental operator $F_1 + zF_2$ where $z$ is on the unit circle.
We get
$$ D_P(A + zB) = (F_1 + zF_2)^* D_P z P + (F_1 + zF_2)D_P = F_2^*D_PP + F_1D_P + z(F_1^* D_PP + F_2 D_P).$$
This holds for every $z$ on the unit circle. Therefore, the required criteria for
commutativity of $V_1$ and $V_3$ are fulfilled. The commutativity of $V_1$ and $V_2$ is more difficult.

\begin{align*}
V_1V_2(h_0,h_1,h_2,\dots) & =V_1(Bh_0, F_1^* D_P h_0 F_2h_1, F_1^* h_1 + F_2h_2, F_1^* h_2 + F_2 h_3, \dots)\\
& = (ABh_0, (F_2^*D_P B + F_1 F_1^* D_P) h_0 + F_1 F_2 h_1 ,  F_2^* F_1^* D_P h_0 \\
& + (F_2^* F_2 + F_1 F_1^*) h_1 , F_2^* F_1^* h_1+ (F_2^* F_2 + F_1 F_1^*) h_2, \dots)
\end{align*}
and
\begin{align*}
V_2 V_1 (h_0,h_1,h_2,\dots)&= V_2(Ah_0, F_2^* D_P h_0 F_1h_1, F_2^* h_1 + F_1h_2, F_2^* h_2 + F_1 h_3, \dots)\\
& =(B Ah_0, (F_1^*D_P A + F_2 F_2^* D_P) h_0 + F_2 F_1 h_1 ,  F_1^* F_2^* D_P h_0 \\
& + (F_1^* F_1 + F_2 F_2^*) h_1 , F_1^* F_2^* h_1 + (F_1^* F_1 + F_2 F_2^*) h_2, \dots).
\end{align*}

Thus, to show that $V_1$ and $V_2$ commute, we need
\begin{enumerate}
\item $F_1$ and $F_2$ commute,
\item $F_2^* F_2 + F_1 F_1^* = F_1^* F_1 + F_2 F_2^*$ and
\item $F_1^*D_PA + F_2F_2^*D_P = F_2^*D_PB + F_1F_1^*D_P$.
\end{enumerate}
The first two are part of assumption. For the third one, we have to prove that
$$F_1^*D_PA + F_2F_2^*D_P = F_2^*D_PB + F_1F_1^*D_P.$$
Or, in other words,
$$D_PF_1^*D_PA + D_PF_2F_2^*D_P = D_PF_2^*D_PB + D_PF_1F_1^*D_P$$
since the ranges of $F_1$ and $F_2$ are contained in $\mathcal D_P$. This last thing is the same as
$$ (A - B^* P)^*A - (B - A^* P)^*B = D_P(F_1 F_1^* - F_2F_2^*)D_P$$
by virtue of fundamental equations. By commutativity of $A$ and $B$, this is the same as
$$ A^* A - B^* B = D_P(F_1 F_1^* - F_2F_2^*)D_P.$$
In view of $F_1F_2 = F_2 F_1$ and Corollary \ref{TandF}, this is
equivalent to
$$ D_P(F_1^* F_1 - F_2^* F_2)D_P = D_P(F_1 F_1^* - F_2F_2^*)D_P$$
which we know to be true from part (2) above. Having gotten commutativity of the $V_i$,
we show that $V_2^* V_3$ is $V_1$. This is a straightforward computation.
\begin{align*}
V_2^*V_3(h_0,h_1,h_2,\dots)&=V_2^*(Ph_0,D_P h_0,h_1,h_2,\dots)\\
&=(B^*Ph_0+ D_P F_1 D_P h_0, F_2^* D_P h_0 + F_1 h_1, F_2^*h_1 + F_1h_2, F_2^*h_2 + F_1h_3,\dots).
\end{align*}
By the first fundamental equation, we have
$B^* P + D_P F_1 D_P = A$. Therefore we have
$V_2^* V_3 = V_1$.

We now show that $r(V_1) \leq 1$ and $r(V_2) \leq 1$.
It is clear from the definition that $V_1$ has the matrix form
$$V_1= \begin{pmatrix} A&0&0&0&\dots\\
F_2^* D_P & F_1& 0 & 0 &\dots \\
0 & F_2^* & F_1 & 0 & \dots \\
0 & 0 & F_2^* & F_1 & \dots\\
\dots&\dots&\dots&\dots&\dots\\
\end{pmatrix},$$ with respect to the decomposition $\mathcal{H}\oplus \mathcal{D}_P \oplus \mathcal{D}_P \oplus
\mathcal{D}_P \oplus...$ of $\mathcal{K}_0$. Thus $V_1$ can be written as  $
\begin{pmatrix} A & 0 \\ C_1 & E_1
\end{pmatrix}$ on $\mathcal{H}\oplus l^2(\mathcal{D}_P)= \mathcal{K}_0$, where $C_1=
\begin{pmatrix}
F_2^* D_P\\0\\0\\ \vdots
\end{pmatrix} \mbox{ and } E_1=
\begin{pmatrix} F_1&0&0&\dots\\F_2^* & F_1 & 0 & \dots \\ 0& F_2^* & F_1 & \dots\\ \dots&\dots&\dots&\dots
\end{pmatrix}$, we have by Lemma 1 of \cite{hong} that $\sigma(T_A)\subseteq\sigma(A)\cup \sigma(E_1)$.
We shall be done if we show that $r(A)$ and $r(E_1)$ are not greater than $1$.
We shall show that the numerical radius of $E_1$ is not greater than $1$.
Since spectral radius is not greater than the numerical radius, we shall be done.
Let us define \begin{align*} \varphi :&
\,\mathbb{D}\rightarrow \mathcal{B}(\mathcal{D})\\ & z
\rightarrow F_1 + F_2^*z.
\end{align*}
Clearly $\varphi$ is holomorphic, bounded and continuous on the
boundary $\partial \mathbb{D}=\mathbb{T}$ of the disc. Under the Hilbert space isomorphism
which sends $ \mathcal{D} \oplus \mathcal{D} \oplus \mathcal{D} \oplus...$ to $H^2(\mathbb D)
\otimes \mathcal D$, the operator $E_1$ goes to multiplication by the function $\varphi$.
Now $w(M_\varphi) \leq \sup\{ w(\varphi(z) ) : z \in \mathbb T \}$. Let us see what $w (\varphi(z))$ is.
Recall that the numerical radius of an operator $X$ is not greater than one if and only if
the real part of the operator $zX$ is not bigger than identity for every $z$ on the unit circle see \cite{ando-nr}.
Since we know that $w(F_1 + zF_2) \leq 1$, we have that  $w(z_1 F_1 + z_2 F_2) \leq 1$
for every $z_1$ and $z_2$ on the unit circle. Thus
$$ (z_1F_1 + z_2F_2) + (z_1F_1 + z_2F_2)^* \leq 2.$$
In other words,
$$ (z_1F_1 + \zbar_2 F_2^*) + (z_1F_1 + \zbar_2F_2^*)^* \leq 2$$
which is the same as
$$z_1 (F_1 + z F_2^*) + \zbar_1 (F_1 + z F_2^*)^* \leq 2$$
for every $z$ and $z_1$ on the unit circle. And that by Ando's result again (\cite{ando-nr}),
implies that $w(F_1 + zF_2^*) \leq 1$.

\vspace*{5mm}

{\em Proof of (2)}: Suppose a tetrablock contraction $(A, B, P)$
is given on a HIlbert space $\mathcal H$. Suppose that it has a
tetrablock isometric dilation $\Wbar = (W_1,W_2,W_3)$ on a Hilbert
space $ \mathcal K$ . Suppose $W_3$ is actually the minimal
isometric dilation of $P$. Then we can obviously take
$W_3=\begin{pmatrix} P & 0 \\ C_3 & E_3
\end{pmatrix}$ with respect to the decomposition $\mathcal H\oplus l^2(\mathcal {D}_P)$
of $\mathcal K$, where $$C_3=\begin{pmatrix} D_P \\0\\0\\ \vdots
\end{pmatrix} \mbox{ from } \mathcal H \rightarrow \mathcal{D}_P
\oplus \mathcal{D}_P \oplus \mathcal{D}_P \oplus\dots \mbox{ and }
E_3=\begin{pmatrix}
0&0&0&\dots\\I&0&0&\dots\\0&I&0&\dots\\\dots&\dots&\dots&\dots
\end{pmatrix}$$
 on  $\mathcal{D}_P \oplus\mathcal{D}_P \oplus\mathcal{D}_P \oplus\dots.$
Using this special form of $W_3$ and using the fact that $W_1$ and
$W_2$ commute with $W_3$, it takes a straightforward computation
to see that $W_1$ and $W_2$ have the operator matrix forms
 $$ W_1 = \begin{pmatrix} A &0\\C_1 & E_1
\end{pmatrix} \mbox{ and } W_2 = \begin{pmatrix} B &0\\C_2 & E_2
\end{pmatrix}$$
for some $C_i$ and $E_i$ for $i=1,2$ with respect to the decomposition of $\mathcal K$ as $\mathcal H \oplus l^2 ( \mathcal D_P)$.

Here, we shall many times use the natural identification between
Hardy space $H^2(\mathcal D_P)$ of $\mathcal D_P$ valued functions
on the unit disk and $l^2(\mathcal D_P) = \mathcal D_P \oplus
\mathcal D_P \oplus \mathcal D_P \ldots $. This Hilbert space
isomorphism will be used without further mention. Under this
Hilbert space isomorphism the operator $E_3$ is the same as the
multiplication operator $M_z^{\mathcal D_P}$ on $H^2(\mathcal
D_P)$. Because $\Wbar$ is a tetrablock isometry, we use the
characterization obtained in Theorem \ref{ti} to get $W_1 = W_2^*
W_3$ and hence
$$
\begin{pmatrix} A & 0\\ C_1 & M_{{\varphi_1}}^{{{\mathcal D}_P}} \end{pmatrix}=
\begin{pmatrix} B^*& C_2^*\\ 0 & ( M_{{\varphi_2}}^{{{\mathcal D}_P}})^* \end{pmatrix}
\begin{pmatrix} P & 0 \\ C_3 & M_z^{{{\mathcal D}_P}} \end{pmatrix} =
\begin{pmatrix} B^* P + C_2^*C_3 & C_2^*M_z^{{{\mathcal D}_P}}\\
{(M_{\varphi_2}}^{{{\mathcal D}_P}})^* C_3 & (M_{{\varphi_2}}^{{{\mathcal D}_P}})^ * M_z^{{{\mathcal D}_P}}
\end{pmatrix},
$$ which gives \begin{eqnarray}\begin{cases} \label{e9}
&(\mbox{i})\; A - B^*P = C_2^*C_3\\&(\mbox{ii})\;
C_1 = (M_{{\varphi_1}}^{{{\mathcal D}_P}})^* C_3 \\&(\mbox{iii})\;
M_{{\varphi_1}}^{{{\mathcal D}_P}} = (M_{\varphi_2}^{{\mathcal D}_P})^* M_z.
\end{cases}\end{eqnarray}

From (\ref{e9})-(iii), it is clear by considering the power
 series expansions of $\varphi_1$ and $\varphi_2$ that $\varphi_1(z) = F_1 + F_2^*z$ and
 $\varphi_2(z) = F_2 + F_1^* z$ for some $F_1$ and $F_2$ in $\mathcal B(\mathcal D_P)$. Thus
 $$E_1=\begin{pmatrix} F_1 & 0 & 0 & \dots \\
 F_2^*& F_1 & 0 & \dots \\
 0& F_2^* & F_1 &\dots \\
 \dots & \dots & \dots & \dots \end{pmatrix} \mbox{ and } E_2= \begin{pmatrix} F_2 & 0 & 0 & \dots \\
 F_1^* & F_2 & 0 & \dots \\
 0& F_1^* & F_2 &\dots \\
 \dots & \dots & \dots & \dots \end{pmatrix}$$
on $\mathcal D_P \oplus \mathcal D_P \oplus \mathcal D_P \oplus\dots$.
Combining this with (\ref{e9})-(ii), we get that
$$W_1 = \begin{pmatrix}
A &0\\ E_1^*C_3 & E_3
\end{pmatrix} \mbox{ on } \mathcal H\oplus l^2(\mathcal D_P).$$
Considering the stated matrix forms of $E_1$ and $C_3$ above, we get $E_1^*C_3=\begin{pmatrix} F_2^* D_P
\\0\\0\\\vdots \end{pmatrix}.$ Hence with respect to the decomposition
$\mathcal H \oplus \mathcal D_P \oplus\mathcal D_P \oplus \dots$ of $\mathcal K_0$, we have
$$ W_1 =\begin{pmatrix} A&0&0&0&\dots\\F_2^* D_P & F_1&0&0&\dots\\
0&F_2^*&F_1&0&\dots\\0&0&F_2^*&F_1&\dots\\\dots&\dots&\dots&\dots&\dots \end{pmatrix}.$$
A similar computation gives that
$$W_2 =\begin{pmatrix} B &0&0&0&\dots\\F_1^* D_P & F_2&0&0&\dots\\
0&F_1^*&F_2&0&\dots\\0&0&F_1^*&F_2&\dots\\\dots&\dots&\dots&\dots&\dots
\end{pmatrix}.$$  Since $W_1$ commutes with $W_3$, we have
$$ \begin{pmatrix} A&0&0&0&\dots\\F_2^* D_P & F_1&0&0&\dots\\
0&F_2^*&F_1&0&\dots\\0&0&F_2^*&F_1&\dots\\\dots&\dots&\dots&\dots&\dots \end{pmatrix}
\begin{pmatrix} P&0&0&0&\dots\\ D_P & I &0&0&\dots\\
0& I & 0 &0&\dots\\0&0& I & 0 &\dots\\\dots&\dots&\dots&\dots&\dots \end{pmatrix}$$
$$ = \begin{pmatrix} P&0&0&0&\dots\\ D_P & I &0&0&\dots\\
0& I & 0 &0&\dots\\0&0& I & 0 &\dots\\\dots&\dots&\dots&\dots&\dots \end{pmatrix}
\begin{pmatrix} A&0&0&0&\dots\\F_2^* D_P & F_1&0&0&\dots\\
0&F_2^*&F_1&0&\dots\\0&0&F_2^*&F_1&\dots\\\dots&\dots&\dots&\dots&\dots \end{pmatrix}.$$
Equating the entries in the second row and first column on both sides, we have $F_2^* D_P P + F_1 D_P = D_P A$.
Similarly, from the fact that $W_2$ commutes with $W_3$, we have $F_1^* D_P P + F_2 D_P = D_P B$.
Now by Corollary \ref{twoneweqns}, we know that $F_1$ and $F_2$ have to be the
fundamental operators of the tetrablock contraction $(A, B, P)$. This immediately tells us that $\Wbar$ and $\Vbar$ are same modulo the unitaries hidden in the arguments above.
Since $W_1$ and $W_2$ commute, equating the diagonal entries of $W_1W_2$ and $W_2W_1$,
we get that $F_1 F_2 = F_2 F_1$.
Equating the subdiagonal entries of $W_1W_2$ and $W_2W_1$, we get that $[F_1, F_1^*] = [F_2 , F_2^*]$.
That completes the proof. \qed

\textsc{Acknowledgement.} The author is thankful to Dr. Sourav Pal
who pointed out a mistake in the first version of this paper.

\end{document}